\documentclass{amsart}
\usepackage[utf8]{inputenc}
\usepackage{amssymb,latexsym}
\usepackage{amsmath}
\usepackage{graphicx}
\usepackage{hyperref}
\usepackage{textcomp}
\usepackage[dvipsnames]{xcolor}
\usepackage{tabularx}
\usepackage{array}

\usepackage{amsthm,amssymb,enumerate,graphicx, tikz}
\usepackage{amscd}
\usepackage{setspace}
\usepackage{comment}
\usepackage{hyperref}
\usepackage{cleveref}
\usepackage{caption}
\usepackage{eucal}

\def\@logofont{\footnotesize}
\def\@setaddresses{\par
  \nobreak \begingroup
  \footnotesize
  \def\author##1{\nobreak\addvspace\bigskipamount}%
  \def\\{\par\nobreak}%
  \interlinepenalty\@M
  \def\address##1##2{\begingroup
    \par\addvspace\bigskipamount\indent
    \@ifnotempty{##1}{(\ignorespaces##1\unskip) }%
    {\scshape\ignorespaces##2}\par\endgroup}%
  \def\curraddr##1##2{\begingroup
    \@ifnotempty{##2}{\nobreak\indent\curraddrname
      \@ifnotempty{##1}{, \ignorespaces##1\unskip}\/:\space
      ##2\par}\endgroup}%
  \def\email##1##2{\begingroup
    \@ifnotempty{##2}{\nobreak\indent\emailaddrname
      \@ifnotempty{##1}{, \ignorespaces##1\unskip}\/:\space
      \ttfamily##2\par}\endgroup}%
  \def\urladdr##1##2{\begingroup
    \def~{\char`\~}%
    \@ifnotempty{##2}{\nobreak\indent\urladdrname
      \@ifnotempty{##1}{, \ignorespaces##1\unskip}\/:\space
      \ttfamily##2\par}\endgroup}%
  \addresses
  \endgroup
}
\renewcommand*\subjclass[2][2010]{%
  \def\@subjclass{#2}%
  \@ifundefined{subjclassname@#1}{%
    \ClassWarning{\@classname}{Unknown edition (#1) of Mathematics
      Subject Classification; using '2000'.}%
  }{%
    \@xp\let\@xp\subjclassname\csname subjclassname@#1\endcsname
  }%
}

\usepackage{graphicx,amsmath,amssymb}
\usepackage{algorithm}
\usepackage{mathdots, comment}
\usepackage[noend]{algpseudocode}

\newtheorem{theorem}{Theorem}[section]

\newtheorem*{theorem*}{Theorem}
\newtheorem{proposition}[theorem]{Proposition}
\newtheorem{lemma}[theorem]{Lemma}

\newtheorem{corollary}[theorem]{Corollary}

\theoremstyle{definition}
\newtheorem{definition}[theorem]{Definition}

\theoremstyle{remark}
\newtheorem{remark}[theorem]{Remark}

\begin{document}
\title[On Partially Matchable Subspaces]{On Partially Matchable Subspaces in a Field Extension}

\author[]{Mohsen Aliabadi$^{1}$ \and Jozsef Losonczy$^{2,*}$}
\thanks{$^1$Department of Mathematics, Clayton State University, 
2000 Clayton State Boulevard, Lake City, Georgia 30260, USA.  \url{maliabadi@clayton.edu}.\\
$^2$Department of Mathematics, Long Island University,
720 Northern Blvd, Brookville, New York 11548, USA. \url{Jozsef.Losonczy@liu.edu}.}
\thanks{$^*$Corresponding Author.}

\thanks{\textbf{Keywords and phrases.} Defective Chowla subspace, Field extension, Free transversal, Linearized Dyson $e$-transform, Partially matchable subspaces. }
\thanks{\textbf{2020 Mathematics Subject Classification}. Primary: 05D15; Secondary: 12F99; 05E16.}

\begin{abstract}
We develop a theory of partial matchings between finite-dimensional subspaces $A,B$ in a field extension $K \subsetneq L$, providing structural, quantitative, and extremal results for their deficiency.  Our main results include (1) a characterization of those pairs $(A,B)$ that are partially matchable up to a specified defect, (2) a decomposition theorem for pairs $(A,B)$ having positive deficiency, (3) an existence criterion for pairs having a prescribed dimension and satisfying a deficiency bound, and (4) a formula for the maximum attainable deficiency at each dimension, assuming $1 \notin B$ and the extension has a proper nontrivial intermediate field of finite $K$-dimension. We use these results to recover and extend various parts of this area of matching theory. Our approach blends algebraic techniques with tools from matroidal transversal theory, and utilizes a linearized version of the $e$-transform from additive number theory.
\end{abstract}

\maketitle

\section{Introduction}\label{Intro}

Let \(G\) be an abelian group, with operation written multiplicatively.  For nonempty finite sets \(A,B\subseteq G\), a \emph{matching} is a bijection \(f:A\to B\) such that \(af(a)\notin A\) for all \(a\in A\). If such a mapping exists, we say that the pair \( (A,B) \) is \emph{matchable}; otherwise, it is called \emph{unmatchable}.  Matchings in groups arose in \cite{Losonczy 1} in connection with a conjecture of E. K. Wakeford on canonical forms for homogeneous polynomials \cite{Wakeford}.  The conjecture concerns the following question: Which sets of monomials are removable from a generic homogeneous polynomial through a linear change in its variables?  General frameworks for approaching this and related problems can be found in \cite{Ehrenborg Rota} and \cite{Ehrenborg}. In \cite{Losonczy 1}, progress on the Wakeford problem was made by establishing the existence of a specific type of matching, called an acyclic matching, for certain pairs of subsets of the additive group \(\mathbb{Z}^n\). 

More concretely, let $A$ be a set of monomials of fixed total degree in the variables \( x_1,\dots,x_n \), with \( |A|=n(n-1) \).  One associates with \( A \) a weighted bipartite graph whose biadjacency matrix \( \mathcal{M}_A \) records how the monomials in \( A \) interact with the differential operators \( x_i\frac{\partial}{\partial x_j} \) for  \( i\neq j \). It was proved in~\cite{Losonczy 1} that \( A \) is generically removable if and only if the matrix \( \mathcal{M}_A \) is nonsingular. This criterion admits a transparent combinatorial interpretation: In the Leibniz expansion of the determinant of \( \mathcal{M}_A \), each nonzero term corresponds to a matching $A\to B$, where the monomials in $A$ are identified with their exponent vectors, and \( B \) is a set of vectors in natural bijection with \( \{x_i\frac{\partial}{\partial x_j} : i\neq j\}\).  It turns out that when every monomial in $A$ involves all $n$ variables, there is an ``acyclic matching" whose associated Leibniz term cannot be canceled by other terms.  Thus such a matching provides a combinatorial certificate for the removability of the monomials in \( A \).

In subsequent work, it was gradually clarified when such matchings occur: The acyclic matching property was established for $\mathbb{Z}^n$ in \cite{Alon}, then extended to all torsion-free abelian groups in \cite{Losonczy 2} by exploiting a total ordering compatible with the group structure \cite{Levi}. The property was characterized for sufficiently small subsets of an arbitrary abelian group in \cite{Aliabadi 0} and then ultimately settled for the full class of abelian groups in \cite{Taylor}.

Since the 1990s, the theory has expanded in several directions, including sharper results for abelian groups \cite{Aliabadi 0} and a generalization to arbitrary (not necessarily abelian) groups \cite{Eliahou 1}. The existence of matchings can be viewed as a consequence of product-set expansion phenomena, as described by extensions of the Scherk--Kemperman theorem obtained by Hamidoune via the isoperimetric method \cite{Hamidoune 3}. Further developments include quantitative and enumerative aspects \cite{Hamidoune}, a linear formulation over field extensions \cite{Eliahou 2}, and a matroidal analogue \cite{Aliabadi 3}. 

Let \(G\) be an abelian group and let \(A,B\subseteq G\) be nonempty finite subsets with \(|A|=|B|\). A \emph{partial matching} of the pair \( (A,B) \) with \emph{defect} \(d\) is defined in  \cite{Aliabadi 7} to be an injective mapping \(f:A'\to B\), where \(A'\subseteq A\) and \(d=|A\setminus A'|\), such that
\[
af(a)\notin A \qquad \text{for all } a\in A'.
\]
When such a mapping exists, one also says that \(A\) is \emph{partially matched} to \(B\) with \emph{defect} \(d\). 

Below are three of the main results on partially matched sets in abelian groups from \cite{Aliabadi 7}. The first theorem provides a necessary and sufficient condition for the existence of a partial matching with a prescribed defect.

\begin{theorem}  \label{Char for partial in groups}
Let \( A \) and \( B \) be nonempty finite subsets of an abelian group \( G \), with \( |A| = |B| \) and \( 1 \notin B \). Let \( 0 \leq d \leq |A| \) be an integer. Then there exists a partial matching of \( (A,B) \) with defect \( d \) if and only if, for every pair of nonempty subsets \( S \subseteq A \) and \( R \subseteq B \cup \{ 1 \} \) with \( SR = S \), we have 
\[ |S| - d \leq |B \setminus R|. 
\]
\end{theorem}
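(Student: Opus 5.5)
This is a deficiency version of Hall's theorem, applied to the bipartite graph $\Gamma$ on $A \sqcup B$ in which $a \in A$ is joined to $b \in B$ whenever $ab \notin A$. A partial matching with defect $d$ is exactly a matching in $\Gamma$ covering $|A|-d$ vertices of $A$. By the defect form of Hall's theorem (König--Ore), such a matching exists if and only if
\[
|N(S)| \geq |S| - d \qquad \text{for every } S \subseteq A .
\]
Here $N(S)=\{b\in B : sb\notin A \text{ for some } s\in S\}$. So the plan is to translate this neighborhood condition into the stated condition on pairs $(S,R)$ with $SR=S$.

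\emph{Complement of a neighborhood.} For $S\subseteq A$, write $R_S=\{b\in B : Sb\subseteq A\}$, so that $B\setminus N(S)=R_S$. The condition becomes
\[
|S|-d\le |B\setminus R_S| \quad \text{for all } S\subseteq A .
\]
This is trivially true for $S=\emptyset$, so only nonempty $S$ matter.

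\emph{Closure step.} Given nonempty $S$, I would pass to a closed pair. Let
\[
R=R_S\cup\{1\}, \qquad S^*=\{x\in A : xR^k\subseteq A \text{ for all } k\},
\]
or, more simply, let $S^*$ be the largest subset of $A$ containing $S$ with $S^*R\subseteq S^*$. This fails as stated, because $R$ must be fixed first. The cleaner route is to work with $T=\{a\in A: a\langle R\rangle \subseteq A\}$, where $\langle R\rangle$ is the submonoid generated by $R$. Since $A$ is finite and $G$ is a group, $\langle R\rangle$ acting on a finite set forces $H=\langle R\rangle$ to be a finite subgroup.

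The key point is then: if $Sb\subseteq A$ for all $b\in R$, then the set
\[
S'=\{x\in A: xR\subseteq A\}
\]
need not be $R$-stable. Instead I would use the standard trick of taking $S$ maximal among counterexamples. Suppose $S$ violates the inequality and is chosen with $|S|$ maximal. Set $S_1=S\cup SR_S\subseteq A$. Every $b\in R_S$ also satisfies $S_1 b\subseteq A$? That is not automatic. So instead choose the violating pair to maximize $|S|-|B\setminus R_S|$, and then argue that $SR_S=S$: if $sb\notin S$ for some $s\in S$ and $b\in R_S$, then adding $sb$ to $S$ changes $R_S$ only by losing those $b'$ with $sbb'\notin A$. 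I expect this exchange/maximality step to be the main obstacle, and I would try first to resolve it by using finiteness. Since $S R_S\subseteq A$, iterating $S\mapsto S\cup SR_S$ while tracking that $R$ shrinks, the process stabilizes at a pair $(S^\infty,R^\infty)$ with $S^\infty R^\infty\subseteq S^\infty$. Finite cancellation in a group gives $|S^\infty b|=|S^\infty|$, hence $S^\infty b=S^\infty$, and including $1$ gives $S^\infty(R^\infty\cup\{1\})=S^\infty$. The point to verify is that the deficiency $|S|-|B\setminus R_S|$ does not decrease along this iteration; equivalently, each element added to $S$ is paid for by at most one element removed from $R$. If that monotonicity fails, I would fall back on the approach of \cite{Aliabadi 7} via Kemperman-type stabilizer arguments.

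\emph{Both directions.} The "only if" direction is easy. If $SR=S$ with $R\subseteq B\cup\{1\}$, then every $b\in R\cap B$ satisfies $Sb\subseteq S\subseteq A$, so $R\cap B\subseteq R_S$. Hall's condition then gives
\[
|S|-d\le |B\setminus R_S|\le |B\setminus R| .
\]
The "if" direction follows from the closure step, since the stabilized pair $(S^\infty,R^\infty\cup\{1\})$ satisfies $S^\infty(R^\infty\cup\{1\})=S^\infty$ and has deficiency at least that of the original violating $S$, contradicting the hypothesis.
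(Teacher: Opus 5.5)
\textbf{There is a genuine gap.} Your reduction is fine. The K\"onig--Ore defect form of Hall's theorem turns the problem into $|S|-d\le |B\setminus R_S|$ for all nonempty $S\subseteq A$, and your ``only if'' direction is correct.

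The gap is that the entire content of the ``if'' direction is the step you leave open: that along $S\mapsto S_1=S(R_S\cup\{1\})$ the quantity $|S|+|R_S|$ does not decrease. You give no argument for it, and it is not a routine count. Closing up by all of $SR_S$ at once gives you no control over how many $b\in R_S$ are lost. In fact the claim is as strong as the aperiodic case of Kneser's theorem. Take finite nonempty $S,R$ with $1\in R$ and $\{g: gSR=SR\}=\{1\}$. Put $A=SR$, and let $B\supseteq R\setminus\{1\}$ with $1\notin B$ and $|B|=|A|$. Then $S_1=SR$ and $R_{S_1}=\{b\in B: SRb=SR\}=\emptyset$. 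So your monotonicity reads $|SR|\ge |S|+|R_S|\ge |S|+|R|-1$, which for $G=\mathbb{Z}/p\mathbb{Z}$ is Cauchy--Davenport.

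The claim happens to be true, so your route can be completed by citing Kneser. Put $R=R_S\cup\{1\}$ and $H=\{g\in G: gSR=SR\}$. Kneser gives $|SR|\ge |SH|+|RH|-|H|\ge |S|+|R|-|R\cap H|$. Every element of $(R\cap H)\setminus\{1\}$ lies in $R_{S_1}$, and so $|S_1|+|R_{S_1}|\ge |S|+|R_S|$. As written, though, the proof stops at exactly this point, and the fallback to ``Kemperman-type stabilizer arguments'' is not an argument.

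The missing elementary idea is Dyson's $e$-transform, which is what the paper relies on (via \cite{Aliabadi 7}; the linear version is Proposition~\ref{Linear e transform}). The point is not to close up all at once.
\begin{itemize}
\item With $R=R_S\cup\{1\}$ and $SR\subseteq A$, pick one $e\in S$ with $eR\not\subseteq S$, and pass to $S_1=S\cup eR$, $R_1=R\cap e^{-1}S$.
\item Then $1\in R_1\subseteq R$, and $S_1R_1\subseteq SR\subseteq A$, since $exy=(ey)x$ for $x\in R$, $y\in R_1$.
\item The map $r\mapsto er$ sends $R\cap e^{-1}S$ bijectively onto $S\cap eR$, so you get the identity $|S_1|+|R_1|=|S|+|R|$ rather than an inequality.
\item Iterating ends at $(S',R')$ with $S'R'=S'$, $1\in R'\subseteq B\cup\{1\}$, and $|S'|+|R'|=|S|+|R_S|+1$.
\item The hypothesis gives $|S'|-d\le |B|-|R'|+1$, hence $|S|-d\le |B\setminus R_S|$, which is the Hall condition.
\end{itemize}
What makes the bookkeeping exact is tracking $R_1=R\cap e^{-1}S$, a subset of the true $R_{S_1}$, instead of recomputing $R$ after adding all of $SR_S$.
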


In the following theorem, \( \delta(A,B) \) stands for the smallest nonnegative integer \( d \) such that \( (A,B) \) has a partial matching with defect \( d \).  One calls \( \delta(A,B) \) the {\it deficiency} of the pair \( (A,B) \). This theorem provides a structural description of unmatchable pairs.

\begin{theorem} \label{Structure of PMG}
Let \( A \) and \( B \) be nonempty finite subsets of an abelian group \( G \), with \( |A| = |B| \) and \( 1 \notin B\).  Let \(\ell \) be a nonnegative integer.  Then \( \delta(A,B) > \ell \) if and only if there exists a nonempty subset \( R \) of \( B \) such that \( A \) and \( B \) can be expressed as disjoint unions as follows:
\begin{align*}
A &= S \cup Y, \\ 
B &= R \cup Z,
\end{align*}
where \( S \) is a disjoint union of cosets of \( \langle R \rangle \), and \( Y \)satisfies \( |Y| < | R | - \ell\). 
\end{theorem}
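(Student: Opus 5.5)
We derive Theorem \ref{Structure of PMG} from Theorem \ref{Char for partial in groups}, applied with $d=\ell$, together with a reduction of the witnessing pairs $(S,R)$ to a canonical form. By definition of deficiency, $\delta(A,B)>\ell$ holds exactly when no partial matching with defect $\ell$ exists. (For $\ell\ge |A|$ both sides are false: the empty map is a partial matching of defect $|A|$, and $|Y|<|R|-\ell\le |B|-|A|=0$ is impossible. So we may assume $0\le \ell\le |A|$.) By Theorem \ref{Char for partial in groups}, $\delta(A,B)>\ell$ is then equivalent to the existence of nonempty $S\subseteq A$ and $R'\subseteq B\cup\{1\}$ with $SR'=S$ and $|S|-\ell>|B\setminus R'|$. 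The task is to turn this violating pair into the decomposition in the statement, and conversely.

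For the forward direction, take a violating pair $(S,R')$ and put $R=R'\setminus\{1\}\subseteq B$. We first dispose of the degenerate case $R=\emptyset$. Then $|S|-\ell>|B|=|A|\ge|S|$, which is impossible. Hence $R\neq\emptyset$.

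Since $1\in R'$ may be assumed without loss (adding $1$ keeps $SR'=S$ and leaves $B\setminus R'$ unchanged), we have $SR'=S$. Because $S$ is finite, $SR=S$ gives $Sr\subseteq S$ with $|Sr|=|S|$, so $Sr=S$ for each $r\in R$. Therefore $S$ is stable under multiplication by $\langle R\rangle$, and $S$ is a disjoint union of cosets of $\langle R\rangle$; note $\langle R\rangle$ is finite because it stabilizes the finite set $S$.

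Now set $Y=A\setminus S$ and $Z=B\setminus R$. Since $|A|=|B|$,
\[
|Y|=|A|-|S|=|B|-|S|<|B|-|B\setminus R|-\ell=|R|-\ell,
\]
using $|B\setminus R'|=|B\setminus R|$ because $1\notin B$. This is exactly the required decomposition.

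For the converse, suppose $A=S\cup Y$ and $B=R\cup Z$ as in the statement. We claim $S$ is nonempty. Otherwise $|Y|=|A|=|B|\ge|R|$, which contradicts $|Y|<|R|-\ell\le|R|$. Take $R'=R\cup\{1\}$. Then $SR'=S$ because $S$ is a union of $\langle R\rangle$-cosets. Moreover
\[
|S|-\ell=|A|-|Y|-\ell>|B|-|R|=|B\setminus R'|,
\]
so $(S,R')$ violates the condition of Theorem \ref{Char for partial in groups}. Hence no partial matching with defect $\ell$ exists, and $\delta(A,B)>\ell$.

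The only point needing care is the passage from $SR=S$ to "$S$ is a union of cosets of $\langle R\rangle$." This rests on finiteness, since $Sr\subseteq S$ with $|Sr|=|S|$ forces $Sr=S$, so $S$ is also closed under $r^{-1}$. The rest is bookkeeping with $1\notin B$ and $|A|=|B|$.
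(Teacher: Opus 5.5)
Your proof is correct, and it is essentially the argument the paper gives for the linear analogue, Theorem~\ref{LinStrucTh}; the group version itself is only quoted from \cite{Aliabadi 7}. As there, you apply the characterization with $d=\ell$, remove $1$ from the witnessing $R'$ to get a nonempty $R\subseteq B$, get the coset structure from stability of $S$ (via $Sr\subseteq SR'=S$ and finiteness, where the paper uses Proposition~\ref{proposition linear}), take complements, and for the converse adjoin $1$ to $R$ and reapply the characterization. Your separate treatment of large $\ell$, which lies outside the range $0\le d\le|A|$ of Theorem~\ref{Char for partial in groups}, is a correct and worthwhile addition.
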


For an abelian group \(G\) with at least one finite nontrivial proper subgroup, denote by \(n_0(G)\) the minimum order of such a subgroup. The theorem below gives a necessary and sufficient condition for the existence of subsets \(A,B\subseteq G\) of a prescribed size and with deficiency \(\delta(A,B)\) exceeding a given value \(\ell\), while also requiring \( 1 \notin B \).

\begin{theorem} \label{Existence partial}
Let $G$ be an abelian group with at least one finite nontrivial proper subgroup. Let \( \ell \) and \( n \) be integers such that \( \ell \ge 0 \) and \( n_0(G) \le n<|G| \). Then there exist subsets $A,B\subseteq G$ such that \( 1\notin B \), \( |A|=|B|=n \), and \( \delta(A,B) > \ell \) if and only if there is a subgroup \( H \) of \( G \) with \( |H|\le n \) and \( |H|\nmid (n+j) \) for all \( j = 1, \ldots, \ell+1 \).
\end{theorem}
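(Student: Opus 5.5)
Both directions will go through Theorem~\ref{Structure of PMG}, which says that $\delta(A,B)>\ell$ exactly when there is a nonempty $R\subseteq B$ with $A=S\cup Y$, where $S$ is a union of cosets of $\langle R\rangle$ and $|Y|<|R|-\ell$.

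For the forward direction, suppose $A,B$ exist with $1\notin B$, $|A|=|B|=n$ and $\delta(A,B)>\ell$, and take $R,S,Y,Z$ as in that theorem. Put $H=\langle R\rangle$. Since $R\subseteq B$ we have $|R|\le n$, and $1\notin R$, so $R\subseteq H\setminus\{1\}$. Hence $|H|\ge |R|+1$, and also $H$ is nontrivial.

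\begin{itemize}
\item \emph{$H$ is finite.} The inequality $|Y|<|R|-\ell$ forces $|R|\ge 1$. If $H$ were infinite, $S$ could contain no full coset, so $S=\emptyset$ and $A=Y$. Then $n=|Y|<|R|\le n$, a contradiction.
\item \emph{$|H|\le n$.} If $S=\emptyset$ the same contradiction appears. So $S$ contains at least one coset, giving $|H|\le |S|\le n$.
\item \emph{Divisibility.} Write $|S|=k|H|$. Then $n=k|H|+|Y|$ with $0\le |Y|\le |R|-\ell-1\le |H|-\ell-2$. Consequently $n+j=k|H|+|Y|+j$ for $1\le j\le \ell+1$, and $|Y|+j$ lies in $[1,|H|-1]$. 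So $|H|\nmid n+j$ for all such $j$.
\end{itemize}

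Here the ``proper'' condition on $H$ is not needed for the argument, so this direction is fine.

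For the converse, let $H$ be a subgroup with $|H|\le n$ and $|H|\nmid n+j$ for $j=1,\dots,\ell+1$. Write $h=|H|$ and $n=kh+r$ with $0\le r<h$, so $k\ge1$. The conditions say that none of $r+1,\dots,r+\ell+1$ is a multiple of $h$. Since these are $\ell+1$ consecutive integers beginning at $r+1\le h$, this means $r+\ell+1<h$, i.e.\ $r<h-1-\ell$. In particular $h\ge 2$, so $H$ is nontrivial.

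\begin{itemize}
\item Set $R=H\setminus\{1\}$, so $\langle R\rangle=H$ and $|R|=h-1$.
\item Let $S$ be a union of $k$ cosets of $H$. Let $Y$ be $r$ elements outside $S$, and set $A=S\cup Y$.
\item Let $B=R\cup Z$, where $Z$ consists of $n-h+1$ elements outside $H\cup\{1\}$. Then $1\notin B$ and $|B|=n$.
\end{itemize}

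Checking the hypotheses of Theorem~\ref{Structure of PMG}: we need $|Y|=r<|R|-\ell=h-1-\ell$, which is exactly the inequality derived above. Hence $\delta(A,B)>\ell$.

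The main obstacle is room in $G$. We need $k$ distinct cosets of $H$ plus $r$ further elements, which requires $n\le |G|$; this holds since $n<|G|$. We need $Z$ to have $n-h+1$ elements outside $H$, which requires $|G|-h\ge n-h+1$, i.e.\ $n<|G|$. If $H=G$, so that $G$ is finite with $h\le n<|G|$, this is impossible. In that case I would replace $H$ by a proper subgroup of the same order, which exists since $h\le n<|G|$ rules out $H=G$.

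Note also that the existence statement cannot depend on $H$ being proper, since the criterion only involves $|H|$. The hypothesis $n_0(G)\le n$ is not needed for this construction; it just guarantees the theorem is not vacuous.
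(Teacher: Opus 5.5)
Your proof is correct, and it is essentially the argument the paper gives for its linear counterpart, Theorem~\ref{ExistancePartial}; the group statement itself is only quoted from \cite{Aliabadi 7}. In both, the two directions go through the structure theorem: $H=\langle R\rangle$ plays the role of $F=K(R)$, the bound $|Y|\le |H|-\ell-2$ makes every residue nonzero, and your choices $R=H\setminus\{1\}$ and $Z\subseteq G\setminus H$ mirror the hyperplane $R\subseteq F$ with $1\notin R$ and $Z\cap F=\{0\}$. Your concern about $H=G$ in the converse is moot: $|H|\le n<|G|$ already forces $H\neq G$, so no replacement subgroup is needed.
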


Theorem~\ref{Char for partial in groups} was obtained using Dyson’s $e$-transform, which serves as a fundamental reduction device in the combinatorial analysis of product sets. More precisely, given finite nonempty subsets \(A\), \(S\), and \(R\) of an abelian group \(G\) with \(1\in R\) and \(SR\subseteq A\), there exist subsets \(S'\) and \(R'\) such that
\begin{itemize}
\item[(i)] \(S \subseteq S'R' = S' \subseteq A\),
\item[(ii)] \(1 \in R' \subseteq R\),
\item[(iii)] \(|S'| + |R'| = |S| + |R|\).
\end{itemize}
A proof of this well-known fact can be found in \cite{Aliabadi 7}.  The transformation \( (S,R) \mapsto (S',R') \) preserves the additive quantity \(|S|+|R|\) while replacing \((S,R)\) with a more structured pair \((S',R')\), and it is precisely this feature that plays an important role in the proof of the first theorem above. 

We have three main objectives in this paper. First, we formulate a notion of partial matchability for finite-dimensional subspaces in a field extension. Second, we deploy a linearization of Dyson’s $e$-transform to establish linear counterparts to Theorems \ref{Char for partial in groups}, \ref{Structure of PMG}, and \ref{Existence partial}.  Finally, we derive several structural and extremal consequences of these theorems.

The paper is organized as follows. Section~\ref{prelim} introduces the general framework of partial matchability in a linear setting and gives the main definitions and tools used throughout. In Section~\ref{PartialFieldSec}, we recall a linear analogue of Dyson’s $e$-transform (Proposition~\ref{Linear e transform}) and use it to characterize partial matchability for pairs of subspaces (Theorem \ref{LinPartial}). We also give an explicit deficiency formula for partially matchable $n$-dimensional subspaces \( A \) and \( B \), answering a conjecture posed in \cite{Aliabadi 4} (Theorem \ref{DefTh} and Remark~\ref{ConjRem}).  In Section~\ref{LinStructure}, we first prove a structural decomposition theorem for partially matchable pairs with positive deficiency (Theorem~\ref{LinStrucTh}). Then we establish a criterion guaranteeing the existence of a pair of subspaces of prescribed dimension in a field extension, where the deficiency of the pair is required to be greater than a specified threshold \( \ell \) (Theorem~\ref{ExistancePartial}). We also derive an explicit formula for the maximal deficiency attainable at a prescribed dimension, expressed in terms of the intermediate degrees of the ambient field extension (Theorem~\ref{MaxDef}).

\section{Preliminaries}\label{prelim}

We begin by establishing some basic terminology and notation. For any positive integer \( n \), we write \( [n] \) for the set \( \{1, \ldots, n\} \).  
Given a subset \( S \) of a vector space \( V \), we let \( \langle S \rangle \) denote the subspace of \( V \) spanned by \( S \).  If \( S = \{x_1, \ldots, x_n\} \), we also write \( \langle x_1, \ldots, x_n \rangle \) for this subspace. 

Suppose \( K \subseteq L \) is a field extension and \( F \) is an intermediate subfield.  If \( U \) is an $F$-subspace of \( L \), we write \( \dim_F U \) for its dimension.  In situations where \( F \) is known to be \( K \), we usually simplify this by writing \( \dim U \).

If \( A,B \) are \( K \)-subspaces of \( L \), we use \( AB \) to denote the \emph{Minkowski product} of \( A \) and \( B \):
\[
AB = \{ ab : a \in A,\ b \in B \}.
\]

In \cite{Eliahou 2}, a notion of matching for subspaces in a skew field extension was introduced and investigated.  Below, we formulate a generalization of the definition in \cite{Eliahou 2}, although we restrict our attention to commutative extensions.  The idea here is to consider situations where a pair of subspaces may not be matched, but are in a precise sense partially matched. The definition proceeds in two steps.

Let \( K \subsetneq L \) be a field extension, and let \( A \) and \( B \) be two \( n \)-dimensional $K$-subspaces of \( L \), with \( n \) a positive integer. Let \( 0 \leq d \leq n \) be an integer. A basis \( \mathcal{A} = \{a_1, \dots, a_n\} \) of \( A \) is said to be {\it{partially matched}} to a basis \( \mathcal{B} = \{b_1, \dots, b_n\} \) of \( B \) with {\it defect} \(d\) if there exists a subset \( P \) of \( [n] \) such that \( |P| = n-d \) and
\[
a_i^{-1} A \cap B \subseteq \langle b_1, \dots, b_{i-1}, b_{i+1}, \dots, b_n \rangle \quad \text{for each } i \in P.
\]

\begin{remark}
Suppose that the above condition holds, and let \(\mathcal{A}' = \{ a_i : i \in P \}\). Consider the injection \(f : \mathcal{A}' \to \mathcal{B}\) given by \(f(a_i) = b_i\).  Then \(a_i f(a_i) \notin \mathcal{A}\) for all \(i \in P\), so that, in the group-theoretic sense, \( f \) is a partial matching of \( (\mathcal{A}, \mathcal{B})\) with defect \( d \), if we view \(\mathcal{A} \) and \(\mathcal{B} \) as subsets of the multiplicative group \(L^{\times}\).
\end{remark}

\begin{remark}  \label{Defect Property}
If \( \mathcal{A} \) is partially matched to \( \mathcal{B} \) with defect \( d \), then for any integer \( d' \) satisfying \( d \leq d' \leq n \), it is easy to see that \( \mathcal{A} \) is partially matched to \( \mathcal{B} \) with defect \( d' \).
\end{remark}

Next, we say that \( A \) is {\it{partially matched}} to \( B \) with {\it{defect \( d \)}} if for every ordered basis \( \mathcal{A} \) of \( A \), there exists an ordered basis \( \mathcal{B} \) of \( B \) such that \( \mathcal{A} \) is partially matched to \( \mathcal{B} \) with defect \( d \). If \(A\) is partially matched to \(B\) with defect \(d = 0\), we will also say, more simply, that \(A\) is {\em matched} to \( B \), or that the pair \( (A,B) \) is {\em matchable}.  

One of the main results of this paper will be a characterization of those pairs \( (A,B) \) such that \( A \) is partially matched to \( B \) with a given defect (Theorem~\ref{LinPartial}).  To prove it, we will find it helpful to work with free partial transversals. We explain this concept next.

Let \( V \) be a vector space over a field \( K \), and let \( \mathcal{W} = ( W_1, \ldots , W_n ) \) be a family of $K$-subspaces of \( V \) (the \( W_i \) are not assumed to be distinct). Let \( 0 \leq d \leq n \) be an integer. A \emph{free partial transversal} of \( \mathcal{W} \) with \emph{defect} \( d \) is a linearly independent subset \( X \) of \( V \) of size \(  n-d \) such that there is an injective mapping \( \theta : X \rightarrow [n] \) satisfying \( x \in W_{\theta(x)} \) for all \( x \in X \).

We will need a method for determining whether a family \( \mathcal{W} \) actually has a free partial transversal with specified defect.  A useful one is provided by the following well-known consequence of the defect form of Rado's theorem (see Theorem 6.2.2 in \cite{Mirsky} and Remark~\ref{defect form Rado} below).

\begin{theorem}\label{Linear Partial}
Let \( V \) be a finite-dimensional vector space over a field \( K \), and consider a family \( \mathcal{W} = ( W_1, \ldots , W_n ) \) of subspaces of \( V \).  Let \( 0 \leq d \leq n \) be an integer. Then \( \mathcal{W} \) admits a free partial transversal with defect \( d \) if and only if
\[
\dim \sum_{i \in J} W_i \geq |J| - d \quad \text{for all } J \subseteq [n].
\]
\end{theorem}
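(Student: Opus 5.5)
The plan is to reduce to the defect-free case, namely Rado's theorem for free transversals of subspace families, via the standard padding trick. The free-transversal version of Rado's theorem states that a family $(U_1,\ldots,U_m)$ of subspaces of a vector space has a free transversal (a linearly independent set $\{u_1,\ldots,u_m\}$ with $u_i\in U_i$) if and only if $\dim\sum_{i\in J}U_i\ge |J|$ for all $J\subseteq[m]$. This is the matroid-union / Rado theorem applied to the linear matroid on $V$. Let $E$ be a $d$-dimensional $K$-space, and set $V'=V\oplus E$ and $W_i'=W_i\oplus E$ for $i\in[n]$.

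\emph{Necessity.} Suppose $X$ is a free partial transversal of $\mathcal{W}$ with defect $d$, with injection $\theta$. Fix $J\subseteq[n]$. At most $d$ indices of $J$ lie outside $\theta(X)$, so at least $|J|-d$ elements of $X$ lie in $\sum_{i\in J}W_i$. These elements are linearly independent, which gives the inequality.

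\emph{Sufficiency.} Assume $\dim\sum_{i\in J}W_i\ge |J|-d$ for all $J$. Then for every $J\neq\emptyset$ we have
\[
\dim\sum_{i\in J}W_i'=\dim\sum_{i\in J}W_i+d\ge |J|,
\]
so Rado's condition holds for $(W_1',\ldots,W_n')$. Hence there is a linearly independent set $\{w_1',\ldots,w_n'\}$ with $w_i'\in W_i'$. Write $w_i'=w_i+e_i$ with $w_i\in W_i$ and $e_i\in E$, and let $\pi:V'\to V'/E\cong V$ be the projection.

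The images $\pi(w_i')=w_i$ span a space of dimension at least $n-d$, because the kernel of $\pi$ restricted to $\langle w_1',\ldots,w_n'\rangle$ lies in $E$ and so has dimension at most $d$. Since $\{w_1',\ldots,w_n'\}$ is a basis of its span, we can choose a subset $P\subseteq[n]$ with $|P|=n-d$ such that $\{w_i:i\in P\}$ is linearly independent. This is just extracting a basis from a spanning set of images, and the $w_i$ with $i\in P$ are then distinct. Now $X=\{w_i:i\in P\}$ with $\theta(w_i)=i$ is the required free partial transversal.

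The main obstacle is that the defect-free Rado theorem is being assumed from the literature, just as the paper defers to Mirsky. If it had to be proved, I would use the standard induction on $n$ with the usual split into two cases: either some nonempty proper $J$ is tight, or one may shrink some $W_i$ to a hyperplane while preserving the condition. Within the reduction itself, the only delicate point is the projection step, which requires dimension counting rather than simply dropping the $e_i$.
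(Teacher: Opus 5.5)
Your proof is correct, but it takes a different route from the paper. The paper gives no argument of its own. It invokes the defect form of Rado's theorem directly (Theorem 6.2.2 in Mirsky, stated for pre-independence structures), applied to the family $(A_1,\ldots,A_n)$ of bases of the $W_i$. The ambient structure is the linearly independent subsets of $E=\bigcup_i A_i$, whose rank function is $\rho(S)=\dim\langle S\rangle$, and both directions follow from that one citation.

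You prove necessity by hand, counting the at least $|J|-d$ elements of $X$ that land in $\sum_{i\in J}W_i$. For sufficiency you use only the defect-free Rado theorem: you pad every $W_i$ with a common $d$-dimensional space $E$ inside $V\oplus E$, then project back. The bound $\dim\bigl(\langle w_1',\ldots,w_n'\rangle\cap E\bigr)\le d$ guarantees that $n-d$ of the projections are independent. This is the linear analogue of the classical trick (adjoining $d$ free elements to every set) that deduces the defect forms of Hall's and Rado's theorems from the defect-free ones.

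Your route buys self-containment: it makes the defect reduction explicit and needs only the classical Rado theorem. Note, though, that applying Rado to subspace families still requires either a finite-rank matroid version or the same passage to bases that the paper spells out in its remark. The paper's route is shorter, but it outsources everything to Mirsky's general framework, including the easy direction.
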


\begin{remark}  \label{defect form Rado}
In \cite{Mirsky}, Rado's theorem and its defect version are formulated in the more general setting of a pre-independence structure \( \mathcal{E} \) on a nonempty set \( E \) with associated rank function \( \rho \).  To recover Theorem~\ref{Linear Partial} above, choose a basis \( A_i \) for each subspace \( W_i \), set \( E = \bigcup_i A_i \), and let \( \mathcal{E} \) be the collection of all linearly independent subsets of \( E \). It is then easy to verify that \( \mathcal{E} \) is a pre-independence structure on \( E \) with rank function \( \rho \) satisfying \( \rho(S) = \mbox{dim} \langle S \rangle \) for each \( S \subseteq E \).  One then applies Theorem~6.2.2 in \cite{Mirsky} to the family \( (A_1, \ldots ,A_n) \).
\end{remark}

\section{Partial matchings in field extensions}\label{PartialFieldSec}

We begin by describing a linear analogue of Dyson's $e$-transform from additive number theory. This linear version was used in the proof of  Theorem~3.1 in \cite{Aliabadi 5}. We are not aware of its appearance elsewhere in the literature.  We present it here in isolation for the convenience of the reader, and to shorten the proof of the main result of this section, Theorem~\ref{LinPartial}.

\begin{proposition} \label{Linear e transform}
Let \( K \subseteq L \) be a field extension, and let \( A \), \( S \), and \( R \) be nonzero $K$-subspaces of \( L \) of finite dimension.  Assume that \( 1 \in R \) and \( SR \subseteq A \).  Then there exist nonzero $K$-subspaces \( S' \) and \( R' \) of \( L \) such that the following conditions hold:
\begin{itemize}
\item[(i)]  \( S'R' = S' \subseteq A \),
\item[(ii)]  \( 1 \in R' \subseteq R \),
\item[(iii)] \( \dim S' + \dim R' =  \dim S + \dim R \).
\end{itemize}
\end{proposition}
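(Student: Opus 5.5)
The plan is to mimic Dyson's $e$-transform directly, with unions and intersections of sets replaced by sums and intersections of subspaces, and to iterate it, inducting on $\dim R$. First note two simple facts. Since $1 \in R$, we have $S = S\cdot 1 \subseteq SR \subseteq A$. Also, for any pair $(S,R)$ with $1\in R$, the Minkowski condition $SR = S$ is equivalent to $sR \subseteq S$ for every $s \in S$, because $S\cdot 1 = S$ gives the reverse inclusion for free. So if $sR \subseteq S$ for all $s \in S$, we are done with $S'=S$ and $R'=R$.

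Otherwise, pick $e \in S$ (necessarily $e \neq 0$) with $eR \not\subseteq S$, and define the transformed pair
\[
S_1 = S + eR, \qquad R_1 = R \cap e^{-1}S .
\]
We then check the following properties.
\begin{itemize}
\item[(a)] $S_1$ and $R_1$ are $K$-subspaces, and $1 \in R_1$ because $e \in S$. In particular both are nonzero, and $R_1 \subseteq R$.
\item[(b)] $S_1 R_1 \subseteq A$. Every element of $S_1R_1$ has the form $(s + er)r_1$ with $s \in S$, $r \in R$, $r_1 \in R_1$. We have $sr_1 \in SR \subseteq A$. Also $er\,r_1 = (er_1) r$, where $er_1 \in S$, so $er\,r_1 \in SR \subseteq A$. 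Since $A$ is a subspace, the sum lies in $A$.
\item[(c)] Dimension is preserved. Since multiplication by $e^{-1}$ is a $K$-linear isomorphism, $e^{-1}(S \cap eR) = e^{-1}S \cap R = R_1$. Hence
\[
\dim S_1 + \dim R_1 = \dim S + \dim eR - \dim(S\cap eR) + \dim R_1 = \dim S + \dim R .
\]
\item[(d)] The dimension of $R$ strictly drops, i.e. $\dim R_1 < \dim R$. Indeed, $R_1 = R$ would mean $eR \subseteq S$, contrary to the choice of $e$.
\end{itemize}

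Now apply the induction hypothesis on $\dim R$ to the triple $(A, S_1, R_1)$. This produces $S', R'$ with $S'R' = S' \subseteq A$ and $1 \in R' \subseteq R_1 \subseteq R$. It also gives $\dim S' + \dim R' = \dim S_1 + \dim R_1 = \dim S + \dim R$. The base case of the induction is $\dim R = 1$, i.e. $R = K$, where $SR = S$ holds trivially. Equivalently, the iteration must terminate because $\dim R$ is a positive integer that decreases at each step.

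The only genuinely delicate point is step (b): verifying that the product of the new pair stays inside $A$. This requires the commutativity trick $er\,r_1 = (er_1)r$ and the fact that the products $sr_1$ and $er\,r_1$ both land in the subspace $A$, so their sum does too, even though $S_1R_1$ is only a Minkowski product of sets. The dimension count (c) is routine once one observes that scaling by $e^{-1}$ identifies $S \cap eR$ with $R_1$.
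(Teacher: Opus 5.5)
Your proof is correct and is essentially the paper's argument: the same transform $S_1 = S + eR$, $R_1 = R \cap e^{-1}S$, the same commutativity check $(s+er)r_1 = sr_1 + (er_1)r$ showing $S_1R_1 \subseteq A$, and the same dimension count via the isomorphism between $R_1$ and $S \cap eR$. The only difference is that you get termination from the strict decrease of $\dim R$, while the paper uses the strict increase of $\dim S_i$ inside the finite-dimensional $A$; by (iii) these are the same observation, and your version does not actually need $\dim A < \infty$.
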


\begin{proof}
If \( SR = S \), we can simply take \( S' = S \) and \( R' = R \), so assume \( SR \neq S \).  Then \( SR \not\subseteq S \). Choose \( e \in S \) and \( r \in R \) such that \( er \in SR\setminus S \). Define subspaces \( S_1 \) and \( R_1 \) as follows:
\begin{align*}
S_1 &= S + eR, \\
R_1 &= R \cap (Se^{-1}).
\end{align*}
We claim that the following conditions hold:
 
\begin{enumerate}
    \item[(a)] \( S_1 R_1 \subseteq \langle SR\rangle \subseteq A \),
    \item[(b)] \( 1 \in R_1 \subseteq R \),
    \item[(c)] \( \dim S_1 + \dim R_1 = \dim S + \dim R \),
    \item[(d)] \( S_1 \subseteq A \) and \( \dim S < \dim S_1 \).
\end{enumerate}
Conditions (a) and (b) follow directly from the definitions of \( S \), \( R \), \( S_1 \), and \( R_1 \). 

To verify (c), we first apply Grassmann's dimension formula for vector subspaces:
\begin{align*}
\dim S_1 &= \dim(S + eR) \\
         &= \dim S + \dim eR - \dim(S \cap eR) \\
         &= \dim S + \dim R - \dim(S \cap eR).
\end{align*}
Now observe that the mapping \( x \mapsto ex \) defines a linear isomorphism from \( R \cap (S e^{-1}) \) to \( S \cap eR \), since \( e \neq 0 \). This implies
\[
\dim R_1 = \dim\big(R \cap (S e^{-1})\big) = \dim(S \cap eR),
\]
so
\[
\dim S_1 + \dim R_1 = \dim S + \dim R,
\]
confirming (c).

Concerning (d), the inclusion \( S_1 \subseteq A \) follows from (a) and the fact that \( 1 \in R_1 \).  For the dimension inequality, note that \( S_1 \) properly contains \( S \), since \( e r \in S_1 \setminus S \). 

All four conditions have been verified. Now if \( S_1 R_1  \neq S_1 \), we repeat the above, substituting \( S_1 \) for \( S \) and \( R_1 \) for \( R \). The process continues until we obtain nonzero subspaces \( S_m \) and \( R_m \) such that \( S_m R_m = S_m \). Termination is guaranteed, since \( \dim A < \infty \) and the sequence of subspaces \( S_1, S_2, \ldots \) strictly increases in dimension. Let \( S' = S_m \) and \( R' = R_m \).  These subspaces clearly satisfy the conditions in the statement.
\end{proof}

The following lemma will be used in the proof of our first theorem on partial matchability.

\begin{lemma}\label{setsize}
Let $n$, $\ell$, and $d$ be nonnegative integers with $\max\{\ell,d, 1 \}\leq n$. Let $P$ be a subset of $[n]$ of size \( n-d \). Then $n-|[\ell]\cap P|\leq n-\ell+d$.
\end{lemma}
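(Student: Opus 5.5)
The inequality $n-|[\ell]\cap P|\leq n-\ell+d$ is equivalent to $|[\ell]\cap P|\geq \ell-d$, so I will prove this lower bound by a direct inclusion–exclusion count inside $[n]$.

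First note that $\ell\le n$, so $[\ell]\subseteq[n]$. Both $[\ell]$ and $P$ are subsets of $[n]$, hence so is $[\ell]\cup P$, and therefore $|[\ell]\cup P|\le n$. Inclusion–exclusion then gives
\[
|[\ell]\cap P| = |[\ell]| + |P| - |[\ell]\cup P| \geq \ell + (n-d) - n = \ell - d .
\]
Here I used $|[\ell]|=\ell$ and $|P|=n-d$; the latter is meaningful because $d\le n$.

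Rearranging $|[\ell]\cap P|\ge \ell-d$ gives $n-|[\ell]\cap P|\le n-\ell+d$, which is the claim. The only point needing care is that $[\ell]\subseteq[n]$, which is exactly where the hypothesis $\ell\le n$ enters. The hypothesis $1\le n$ plays no role beyond making $[n]$ nonempty. The case $\ell=0$ is trivial, since then $[\ell]=\emptyset$ and the bound reads $n\le n+d$. I expect no genuine obstacle in this argument.
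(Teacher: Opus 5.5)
Your proof is correct and is essentially the paper's argument. The paper first disposes of the case $d\ge\ell$, then argues by contradiction using the bound $|P\setminus[\ell]|\le n-\ell$. Your inclusion--exclusion bound $|[\ell]\cup P|\le n$ is the same count done directly, which makes both the case split and the contradiction unnecessary.
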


\begin{proof}
The desired inequality is equivalent to \( |[\ell] \cap P| \ \ge \ \ell - d \), which obviously holds if \( d \geq \ell \).  Suppose \( d < \ell \), and assume for a contradiction that \( |[\ell] \cap P| < \ \ell - d \). Then
\[
|P| = |[\ell] \cap P| + |([n]\setminus [\ell]) \cap P| < \ell -d + n - \ell = n - d,
\]
contrary to the assumption that \( |P| = n-d \).  The lemma is proved.    
\end{proof}

To prove the main theorem of this section, we need to fix some standard notation. Let \(V\) be a $K$-vector space, and write \( V^* \) for its dual space, i.e., the vector space consisting of all $K$-linear mappings \( f:V\to K \).  For a subspace \(W\subseteq V\), its annihilator \(W^\perp\subseteq V^*\) is defined by
\[
W^\perp=\{f\in V^* : W\subseteq \ker f \}.
\]
If \(V\) is finite-dimensional, then we have the following two well-known identities for annihilators: given subspaces \(U_1, \ldots ,U_m,W\) of \( V \),
\[
\Bigl(\bigcap_{i=1}^m U_i\Bigr)^{\perp} = \sum_{i=1}^m U_i^\perp 
\]
and
\[
\dim W^\perp=\dim V-\dim W.
\]
In what follows, all direct sums will be internal.

With the above preparations, we now present the main result of this section.

\begin{theorem}\label{LinPartial}
Let \( K \subsetneq L \) be a field extension, and let \( A \) and \( B \) be two \( n \)-dimensional \( K \)-subspaces of \( L \), with \( 0 < n < \infty \) and \( 1 \notin B \). Let $0 \leq d \leq n$ be an integer. Then $A$ is partially matched to $B$ with defect $d$ if and only if, for every pair of nonzero $K$-subspaces \( S\subseteq A \) and \( R\subseteq B \oplus K \) with \( SR = S\), we have 
\[
\dim S-d\leq \dim \big(B/(B\cap R)\big).
\]
\end{theorem}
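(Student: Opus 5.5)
We will translate partial matchability into the existence of a free partial transversal in the dual space $B^*$, and then use Theorem~\ref{Linear Partial} to get a dimension criterion. Necessity then follows by a direct computation. Sufficiency follows from the linear $e$-transform (Proposition~\ref{Linear e transform}).

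\emph{Reduction to a transversal problem.} Fix an ordered basis $\mathcal{A}=\{a_1,\dots,a_n\}$ of $A$ and put $V_i=a_i^{-1}A\cap B$, a subspace of $B$. We claim that $\mathcal{A}$ is partially matched with defect $d$ to some basis of $B$ if and only if the family $(V_1^\perp,\dots,V_n^\perp)$ of subspaces of $B^*$ has a free partial transversal with defect $d$.

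Suppose such a transversal exists, with injection $\theta$. Write $f_i$ for the element mapped to $i$, where $i\in P=\theta(X)$. Extend $\{f_i\}$ to a basis $\{f_1,\dots,f_n\}$ of $B^*$ and let $\{b_1,\dots,b_n\}$ be the dual basis of $B$. For $i\in P$ we have $f_i(V_i)=0$, and an element of $B$ killed by $f_i$ lies in $\langle b_j:j\neq i\rangle$; so $V_i\subseteq\langle b_j:j\neq i\rangle$. Conversely, given $\mathcal{B}$ and $P$, the coordinate functionals $f_i=b_i^*$ with $i\in P$ form the required transversal.

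By Theorem~\ref{Linear Partial} and the annihilator identities, the transversal exists if and only if
\[
n-\dim\bigcap_{i\in J}V_i\;\ge\;|J|-d\qquad\text{for all } J\subseteq[n].\tag{$*$}
\]
Thus $A$ is partially matched to $B$ with defect $d$ if and only if $(*)$ holds for every ordered basis of $A$.

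\emph{Necessity.} Let $S,R$ be as in the statement with $k=\dim S$. Take an arbitrary basis of $A$ and suppose it is partially matched with defect $d$. Since $(*)$ is a condition on the basis, it is cleaner to first choose the basis of $A$ so that its first $k$ vectors span $S$; by hypothesis this basis also satisfies $(*)$. For $i\le k$ and $x\in B\cap R$ we have $a_ix\in SR=S\subseteq A$, so $x\in V_i$. Hence $B\cap R\subseteq\bigcap_{i\le k}V_i$. Applying $(*)$ with $J=[k]$ gives $n-\dim(B\cap R)\ge k-d$, which is exactly the required inequality $\dim S-d\le\dim\big(B/(B\cap R)\big)$.

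\emph{Sufficiency (the main step).} Assume some basis of $A$ violates $(*)$ for some $J$, so that $\dim W>n-|J|+d$, where $W=\bigcap_{i\in J}V_i$. Put $S_0=\langle a_i:i\in J\rangle$, which has dimension $|J|$, and $R_0=K\oplus W$. The sum is direct because $1\notin B$, and $R_0\subseteq B\oplus K$. Since $a_iW\subseteq A$ for every $i\in J$, we get $S_0R_0\subseteq A$.

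Proposition~\ref{Linear e transform} then yields $S',R'$ with $S'R'=S'\subseteq A$, $1\in R'\subseteq R_0\subseteq B\oplus K$, and
\[
\dim S'+\dim R'=|J|+1+\dim W.
\]
Since $1\in R'$ and $1\notin B$, the subspace $R'\cap B$ has codimension exactly $1$ in $R'$ (it is the kernel of the projection of $R'$ onto $K$). So $\dim(B\cap R')=\dim R'-1$, and
\[
\dim S'-d=|J|+\dim W-\dim(B\cap R')-d>n-\dim(B\cap R')=\dim\big(B/(B\cap R')\big).
\]
This contradicts the hypothesis, so $(*)$ holds for every basis and $A$ is partially matched to $B$ with defect $d$.

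The delicate point is this last step: recognizing that the $e$-transform preserves the quantity $\dim S+\dim(R\cap B)$. That is exactly what allows a violating index set $J$ for an arbitrary basis to be upgraded to a genuinely stable pair $S'R'=S'$.
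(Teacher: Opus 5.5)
Your proof is correct and follows essentially the same route as the paper. Both arguments dualize to a free partial transversal problem for $(V_1^\perp,\dots,V_n^\perp)$ in $B^*$ via the annihilator identities and Theorem~\ref{Linear Partial}, and both handle sufficiency with the linear $e$-transform together with $\dim(B\cap R')=\dim R'-1$.

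The differences are only organizational: you prove the per-basis equivalence with $(*)$ up front (as the paper does later in Theorem~\ref{DefTh}(i)) and use it for necessity instead of Lemma~\ref{setsize}, and you argue sufficiency by contradiction without separating the case $S_0R_0=S_0$, which the $e$-transform covers anyway. You should say explicitly that a violating $J$ is nonempty, since $(*)$ holds trivially for $J=\emptyset$; this guarantees $S_0\neq\{0\}$, as Proposition~\ref{Linear e transform} requires.
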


\begin{proof}
Assume that \(A\) is partially matched to \(B\) with defect \( d \). Let \(S \subseteq A \) and \(R \subseteq B\oplus K \) be nonzero $K$-subspaces with \( SR  = S \). Choose a basis \( \{ a_1, \ldots , a_{\ell} \} \) for \( S \), and extend this to a basis \( \mathcal{A}=\{a_1,\dots,a_\ell,a_{\ell+1},\dots,a_n\} \) for \( A \).  Since \(A\) is partially matched to \(B\) with defect \(d\), there exists a basis \(\mathcal{B}=\{b_1,\dots,b_n\}\) of \(B\) such that \(\mathcal{A}\) is partially matched to \(\mathcal{B}\) with defect \(d\). Thus there exists a subset \(P \) of \( [n] \) of size \( n-d \) such that 
\[
a_i^{-1}A\cap B \;\subseteq\; \langle b_1,\dots,b_{i-1},b_{i+1},\dots,b_n\rangle
\quad\text{for each } i\in P.
\]

Now, we have
\begin{align}\label{eq1}
\bigcap_{i\in[\ell]} \big(a_i^{-1}A \cap B\big)
&\subseteq \bigcap_{i\in[\ell] \cap P} \big(a_i^{-1}A \cap B\big) \\
&\subseteq \bigcap_{i\in[\ell] \cap P} \langle b_1, \dots, b_{i-1}, b_{i+1}, \dots, b_n\rangle, \nonumber
\end{align}
where the last two intersections are understood to equal \( B \) if \( [\ell] \cap P \) is empty. Clearly
\[
\dim \! \left( \bigcap_{i\in[\ell] \cap P} \langle b_1, \dots, b_{i-1}, b_{i+1}, \dots, b_n\rangle \right)
= n - |[\ell] \cap P|.
\]
Combining this with (\ref{eq1}), we get
\[
\dim \bigcap_{i\in[{\ell}]}(a_i^{-1}A\cap B) \leq n-|[\ell]\cap P|.
\]

On the other hand, since \( SR = S \), we have \( a_i R \subseteq S \) for each \( i \in [\ell] \), and hence  
\[
R \subseteq a_i^{-1} S \subseteq a_i^{-1} A.
\]
Therefore
\[
R \cap B \;\subseteq\; \bigcap_{i\in[\ell]} \big(a_i^{-1} A \cap B\big).
\]
Thus, according to Lemma \ref{setsize},  
\[
\dim (R\cap B)\leq \dim \bigcap_{i\in[{\ell}]}(a_i^{-1}A\cap B) \leq n-\ell+d,
\]
which implies the desired inequality
\[
\dim S-d\leq \dim \big(B/(B\cap R)\big).
\]

Conversely, assume that the condition in the statement involving \( S \) and \( R \) holds.  
We will show that \( A \) is partially matched to \( B \) with defect \( d \).  
Let \(\mathcal{A} = \{a_1,\dots,a_n\}\) be a basis for \(A\), and let \(J \subseteq [n]\) be nonempty.  
Set
\[
S = \langle a_i : i \in J \rangle, \quad
T = \bigcap_{i \in J} (a_i^{-1} A \cap B), \quad
R = T + K,
\]
and note that the sum \( T + K \) is direct.  We claim that \(\dim T \leq n - |J| + d\).  To verify this, we consider two cases.

\medskip

\textbf{Case 1:} \( SR = S\).  
By hypothesis,
\[
\dim S - d \;\leq\; \dim \big(B/(B\cap R)\big).
\]
Since \(T \subseteq B\), \(R = T \oplus K\), and \(1 \notin B\), it follows that
\(T = B \cap R\). Therefore
\[
\dim T \;=\; \dim(B \cap R) \;\leq\; \dim B - \dim S + d \;=\; n - |J| + d,
\]
as claimed.

\medskip

\textbf{Case 2:} \( SR \neq S \). We wish to apply Proposition~\ref{Linear e transform}. By construction, we have $ST\subseteq A$. Hence, since $R=T\oplus K$, 
\[
SR=S(T\oplus K) \subseteq ST + SK \subseteq \langle ST\cup S\rangle \subseteq A.
\]
Therefore, by Proposition~\ref{Linear e transform}, there exist nonzero $K$-subspaces \( S' \) and \( R' \) of \( L \) such that 
\begin{itemize}
\item[(a)]  \( S'R' = S' \subseteq A \),
\item[(b)]  \( 1 \in R' \subseteq R \subseteq B \oplus K \),
\item[(c)] \( \dim S' + \dim R' =  \dim S + \dim R \).
\end{itemize}
By applying our hypothesis to \( S' \) and \( R' \), which we are able to do on account of (a) and (b), we obtain
\[
\dim S' - d \;\leq\; \dim \big(B/(B\cap R')\big).
\]
Now, it is easy to see that \( R' = (B \cap R')\oplus K\). Hence \( \dim (B \cap R') = \dim R' - 1 \), and so we have the inequality  
\[
\dim S'- d \;\leq\; n - \dim R' + 1,
\]
which, combined with (c) and the fact that \( \dim R = \dim T + 1 \), yields
\[
\dim T \;\leq\; n - |J|+d,
\]
as claimed.

So in both cases we have \( \dim T \leq n - |J|+d \). Passing to the annihilator in the dual space \( B^* \), we obtain, by way of one of the annihilator identities,
\[
\dim T^\perp \geq |J|-d,
\]
which leads, using the other annihilator identity, to
\[
\dim \sum_{i \in J} (a_i^{-1} A \cap B)^\perp  \geq |J| - d. 
\]
By Theorem~\ref{Linear Partial}, the family
\[
\mathcal{W} = \big( (a_1^{-1} A \cap B)^\perp, \dots, (a_n^{-1} A \cap B)^\perp \big)
\]
admits a free partial transversal with defect \(d\).  Thus there is a subset \( P \)  of \( [n] \) and a linearly independent subset   
\( \{ f_i : i \in P \} \) of \( B^* \), both of size \( n - d \), such that  
\begin{align}\label{eq2}
    f_i \in (a_i^{-1} A \cap B)^\perp \quad \text{for each } i \in P. 
\end{align}

Extend \( \{ f_i : i \in P \} \) to a basis \(\mathcal{F} = \{ f_1, \dots, f_n \}\) of \(B^*\), and let \(\mathcal{B} = \{ b_1, \dots, b_n \}\) be the basis of \(B\) dual to \(\mathcal{F}\). We show that \(\mathcal{A}\) is partially matched to \(\mathcal{B}\) with defect \(d\). Since \(f_i(b_j) = \delta_{ij}\), it follows that
\[
\ker f_i = \langle b_1, \dots, b_{i-1}, b_{i+1}, \dots, b_n \rangle
\quad \text{for each } i \in P.
\]
Combining this with \eqref{eq2} yields
\[
a_i^{-1} A \cap B
\subseteq \langle b_1, \dots, b_{i-1}, b_{i+1}, \dots, b_n \rangle
\quad \text{for each } i \in P.
\]
Hence \(\mathcal{A}\) is partially matched to \( \mathcal{B} \) with defect \( d \). We have thus shown that \( A \) is partially matched to \( B \) with defect \( d \), completing the proof.
\end{proof}

In the remainder of this section, we explore several consequences of Theorem~\ref{LinPartial}. To state our results, we will need some standard notation and terminology from field theory. Let \(K\subseteq L\) be a field extension and let \(R \) be a subset of \( L \). We write \( K(R) \) for the smallest subfield of \(L\) containing \(K\cup R\). If \(R=\{x\}\) (for some \( x \in L\)), we write \(K(x)\) instead of \(K(\{x\})\). We say that \(x\) is \emph{algebraic} over \(K\) if \(K(x)\) has finite dimension as a $K$-vector space. Given an intermediate field \( K \subseteq F \subseteq L \), we denote the $K$-dimension of \( F \), called the \emph{degree} of \(F\) over \( K \), by \( [F:K] \). If \( F = K(x) \), we also refer to this value as the degree of \( x \) over \( K \).

The following proposition explains how the condition \( SR = S \) will be used in our applications of Theorem~\ref{LinPartial}. A proof can be found in \cite{Aliabadi 6}.

\begin{proposition} \label{proposition linear}
Let \( K \subseteq L \) be a field extension, let \( n \) be a positive integer, and let \( S \) and \( R \) be nonzero \( K \)-subspaces of \( L \).  Assume that \( SR = S \) and  \( \dim_K S \leq n \). The following statements hold: 
\begin{itemize}
\item[(i)] Let \( x \in R \). Then \( aK(x) \subseteq S \) for all \( a \in S \). In addition, \( [K(x):K] \leq n\), so that \( x \) is algebraic over $K$.    
\item[(ii)] We in fact have \( aK(R) \subseteq S \) for all \( a \in S \). Also, \( [K(R):K] \leq n \) and \( \dim_K S \) is a positive multiple of \( [K(R):K] \).
\end{itemize}
\end{proposition}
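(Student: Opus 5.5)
We need to prove Proposition~\ref{proposition linear}. The plan is to use the fact that multiplication by a fixed element preserves $S$, and then to read $S$ as a vector space over the field $K(R)$.

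\textbf{Step 1: setup for (i).} Fix $x \in R$ and $a \in S$, and assume $a\neq 0$, since the case $a=0$ is trivial. Since $SR=S$ and $S$ is a $K$-subspace, we have $Sx \subseteq S$. By induction, $Sx^k\subseteq S$ for all $k\ge 0$. Hence $aK[x]\subseteq S$, because $S$ is closed under $K$-linear combinations.

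\textbf{Step 2: algebraicity and degree bound in (i).} The map $y\mapsto ay$ is an injective $K$-linear map $K[x]\to S$. Therefore $\dim_K K[x]\le \dim_K S\le n$. So $1,x,\dots,x^n$ are linearly dependent, and $x$ is algebraic over $K$. For algebraic $x$ we have $K[x]=K(x)$, hence $[K(x):K]\le n$ and $aK(x)\subseteq S$. This proves (i).

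\textbf{Step 3: from single elements to $K(R)$ in (ii).} Let $T=\{y\in L : Sy\subseteq S\}$. Then $T$ is closed under addition and multiplication and contains $K$, so it is a subring of $L$ containing $K$. By Step 1, every $x\in R$ satisfies $Sx\subseteq S$, so $R\subseteq T$. Now $T$ embeds $K$-linearly into $S$ via $y\mapsto ay$ for any fixed $a\in S\setminus\{0\}$, so $\dim_K T\le n$. A domain that is finite-dimensional over a field is itself a field, so $T$ is a field with $K\cup R\subseteq T$. Hence $K(R)\subseteq T$, which gives $aK(R)\subseteq S$ for all $a\in S$ and $[K(R):K]\le \dim_K T\le n$.

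\textbf{Step 4: divisibility.} By Step 3, $S$ is closed under multiplication by $F=K(R)$, so $S$ is an $F$-vector space. Since $\dim_K S<\infty$ and $S\neq 0$, we have $\dim_F S=m\ge 1$ finite. By the tower law,
\[
\dim_K S=m\,[F:K],
\]
a positive multiple of $[K(R):K]$. This completes (ii).

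The only delicate step is Step 3: replacing ``each generator is algebraic'' by a finite bound on $[K(R):K]$ when $R$ may be infinite. The stabilizer-ring argument handles this, because it bounds the whole ring at once. The key ingredient is that a finite-dimensional domain over a field is a field, since multiplication by a nonzero element is an injective, hence surjective, linear map.
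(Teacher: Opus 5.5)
Your proof is correct and complete. The inclusion $Sx\subseteq SR\subseteq S$ holds for $x\in R$. For a fixed nonzero $a\in S$, the map $y\mapsto ay$ is injective. The stabilizer ring $T=\{y\in L : Sy\subseteq S\}$ is a finite-dimensional domain over $K$, hence a field containing $K\cup R$. Finally, the tower law applies to $S$ as a $K(R)$-space. All of these steps are sound.

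This paper gives no proof of its own for this proposition; it defers to \cite{Aliabadi 6}. So there is no in-text argument to compare your route against.

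Two small remarks:
\begin{itemize}
\item Step 3 already implies (i). For $x\in R$ you have $K(x)\subseteq K(R)\subseteq T$, which gives $aK(x)\subseteq S$ and $[K(x):K]\le[K(R):K]\le n$. Steps 1--2 are therefore not logically needed.
\item The point you flag as delicate is milder than it looks. The same injection $y\mapsto ay$, restricted to $R$, shows $\dim_K R\le n$. So you could instead take a $K$-basis $x_1,\dots,x_k$ of $R$, which consists of algebraic elements by (i). Then $K(R)=K[x_1,\dots,x_k]$, and iterating $Sx_i\subseteq S$ gives $aK(R)\subseteq S$.
\end{itemize}
Your stabilizer-ring argument does all of this in one stroke and is a clean way to write it.
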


Let \( B \) be an $n$-dimensional $K$-subspace in a field extension \( K \subsetneq L \), with \( n > 0 \) and \( 1 \notin B \). For our first application of Theorem~\ref{LinPartial}, we show that if \( B \) does not contain \( d + 1 \) linearly independent elements of degree \( \leq n \) over \( K \), then any $n$-dimensional subspace of \( L \) can be partially matched to it with defect \( d \).

\begin{corollary}
Let $K \subsetneq L$ be a field extension and let $A,B \subseteq L$ be $n$-dimensional $K$-subspaces, with \( 0 < n < \infty \) and  \( 1 \notin B \).  Let \( 0 \leq d \leq n \) be an integer. Suppose that 
\[
\dim \langle\, b \in B : [K(b):K] \leq n \,\rangle \leq d.
\]
Then $A$ is partially matched to $B$ with defect $d$.
\end{corollary}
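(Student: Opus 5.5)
We verify the criterion of Theorem~\ref{LinPartial}. Fix nonzero $K$-subspaces $S\subseteq A$ and $R\subseteq B\oplus K$ with $SR=S$; we must show $\dim S-d\le \dim\big(B/(B\cap R)\big)=n-\dim(B\cap R)$. Since $\dim S\le \dim A=n$, it suffices to prove $\dim(B\cap R)\le d$: then $\dim S-d\le n-d\le n-\dim(B\cap R)$.

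To bound $\dim(B\cap R)$, apply Proposition~\ref{proposition linear}(i) to the pair $(S,R)$. This is legitimate because $SR=S$ and $\dim_K S\le n$. It gives $[K(x):K]\le n$ for every $x\in R$. In particular, every $b\in B\cap R$ satisfies $[K(b):K]\le n$. Hence
\[
B\cap R\subseteq \langle\, b\in B : [K(b):K]\le n\,\rangle,
\]
and the hypothesis gives $\dim(B\cap R)\le d$. Combining with the first paragraph, the condition of Theorem~\ref{LinPartial} holds for all admissible $(S,R)$, so $A$ is partially matched to $B$ with defect $d$.

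There is essentially no obstacle. The only points to check are that the hypotheses of Proposition~\ref{proposition linear} are met, namely nonzero subspaces, $SR=S$ and $\dim S\le n$, and that the crude bound $\dim S\le n$ is enough, which it is because the whole deficit is absorbed by $\dim(B\cap R)\le d$. Note that the hypothesis $1\notin B$ is needed only so that Theorem~\ref{LinPartial} applies.
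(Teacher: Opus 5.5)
Your proof is correct and follows the paper's argument. Both verify the criterion of Theorem~\ref{LinPartial} by using Proposition~\ref{proposition linear}(i) to place $B\cap R$ inside $\langle\, b\in B : [K(b):K]\le n\,\rangle$. This gives $\dim(B\cap R)\le d$, and hence $\dim S-d\le n-d\le n-\dim(B\cap R)$.
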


\begin{proof}
Define
\[
D=\langle\, b \in B : [K(b):K] \le n \,\rangle.
\]
Let \( S \subseteq A \) and \( R \subseteq B \oplus K \) be nonzero $K$-subspaces with \( SR = S \). We claim that \( B \cap R \subseteq D \). Indeed, if \( x \in L \setminus D \), then \( x \notin B \) or \( [K(x):K] > n \), the latter implying \( x \notin R \) by Proposition~\ref{proposition linear}.  

Therefore \( \dim (B \cap R) \leq d \), and so 
\[
\dim S - d \;\le\; n-d \;\le\; \dim B - \dim(B\cap R)
\;=\; \dim \big( B / (B\cap R) \big).
\]
By Theorem~\ref{LinPartial}, $A$ is partially matched to $B$ with defect $d$.
\end{proof}

\begin{definition}  \label{delta definition}
Let \(K\subsetneq L\) be a field extension and let \(A,B\subseteq L\) be finite-dimensional $K$-subspaces with \(\dim A=\dim B > 0\) and \(1\notin B\). We define \(\delta_{L/K}(A,B)\) to be the least nonnegative integer \(d\) for which \(A\) is partially matched to \(B\) with defect \(d\). When the ambient extension is clear from the context, we write \(\delta(A,B)\) in place of \(\delta_{L/K}(A,B)\).
\end{definition}

\begin{corollary}
Let \(K \subsetneq L\) be a field extension, and let \(A\) and \(B\) be two \(n\)-dimensional \(K\)-subspaces of \(L\), with \( 0 < n < \infty \) and \(1 \notin B\).  Then \( \delta(A,B) \) equals the maximum value of 
\[
\dim S - \dim \big(B/(B\cap R)\big), 
\]
where \( S \) and \( R \) range over all nonzero subspaces of \( A \) and \( B \oplus K \), respectively, such that \( SR = S \).
\end{corollary}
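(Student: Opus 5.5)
The plan is to read the corollary off Theorem~\ref{LinPartial} by turning its family of inequalities into a single maximum. Let
\[
M = \max \Big\{ \dim S - \dim\big(B/(B\cap R)\big) \Big\},
\]
where the maximum is over all pairs of nonzero subspaces $S \subseteq A$ and $R \subseteq B \oplus K$ with $SR = S$. I will show that $M$ exists, lies in $[0,n]$, and equals $\delta(A,B)$.

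\textbf{The maximum exists.} The pair $S = A$, $R = K$ is admissible, since $AK = A$ and $K \subseteq B \oplus K$. Because $1 \notin B$, we have $B \cap K = 0$, so this pair contributes
\[
\dim A - \dim B = n - n = 0 .
\]
Every quantity in the set satisfies
\[
\dim S - \dim\big(B/(B\cap R)\big) \le \dim S \le n .
\]
So the set is a nonempty set of integers bounded above by $n$. Hence $M$ exists and $0 \le M \le n$.

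\textbf{Rewriting the condition.} For an integer $0 \le d \le n$, the criterion in Theorem~\ref{LinPartial} says that $\dim S - d \le \dim\big(B/(B\cap R)\big)$ for every admissible pair. Equivalently, $\dim S - \dim\big(B/(B\cap R)\big) \le d$ for every admissible pair, which is exactly the statement $M \le d$. So Theorem~\ref{LinPartial} gives: $A$ is partially matched to $B$ with defect $d$ if and only if $M \le d$.

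\textbf{Conclusion.} By Definition~\ref{delta definition}, $\delta(A,B)$ is the least $d \in \{0,\dots,n\}$ with $M \le d$. Since $M$ itself lies in $\{0,\dots,n\}$, that least value is $M$. This also confirms that $\delta(A,B)$ is well defined: taking $d = n$ always works, because $M \le n$.

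There is no substantial obstacle. The only step needing care is the nonnegativity of $M$, which is what the hypothesis $1 \notin B$ (through $B \cap K = 0$) is used for.
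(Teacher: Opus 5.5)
Your proposal is correct and follows essentially the same route as the paper: both arguments apply the two directions of Theorem~\ref{LinPartial} to the maximum of the differences, which you package as the single equivalence ``defect $d$ works iff $M \le d$'', and both use the pair $S=A$, $R=K$ to get $M \ge 0$ and the bound $\dim S \le n$ to get $M \le n$. Your explicit check that $B \cap K = \{0\}$ (which follows from $1 \notin B$) spells out a detail that the paper's note leaves implicit.
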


\noindent {\it Note: }The maximum value in the statement is at least \( 0 \), since we can take \( S = A \) and \( R = K \), and is clearly no greater than \( n \).

\begin{proof}
Let \( d = \delta(A,B) \) and define \( \mathcal{I} \) to be the set of all differences 
\[ \dim S - \dim \big(B/(B\cap R)\big), 
\] 
where \( S \) and \( R \) are as in the statement.

Since \( A \) is partially matched to \( B \) with defect \( d \), it follows from Theorem~\ref{LinPartial} that for all such \( S \) and \( R \),
\[
\dim S - d \;\le\; \dim \big(B/(B\cap R)\big).
\]
Hence \( d \ge \max \mathcal{I} \).

For the reverse inequality, let \(d' = \max \mathcal{I}\). Then for all nonzero subspaces \(S \subseteq A\) and \(R \subseteq B \oplus K\) with \( SR = S\), we have
\[
\dim S - d' \;\le\; \dim \big(B/(B\cap R)\big).
\]
Applying Theorem~\ref{LinPartial} again, \(A \) is partially matched to \( B \) with defect \( d' \). By the minimality of \(d = \delta(A,B)\), it follows that \(d \le d'\).  
Combining this with the previous paragraph gives \(d = d'\), as claimed.
\end{proof}

The next result relates the notion of defect to the dimension of the intersection of the two given subspaces \( A \) and \( B \).

\begin{corollary}
Let \( K \subsetneq L \) be a field extension, and let \( A \) and \( B \) be two \(n\)-dimensional \(K\)-subspaces of \(L\), with \( 0 < n < \infty \) and \((A + B) \cap K = \{0\}\).  Let \(d = \dim A - \dim (A \cap B)\). Then \(A\) is partially matched to \(B\) with defect \(d\).
\end{corollary}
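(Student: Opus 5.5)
The plan is to apply Theorem~\ref{LinPartial} with $d=\dim A-\dim(A\cap B)=n-\dim(A\cap B)$. Fix nonzero $K$-subspaces $S\subseteq A$ and $R\subseteq B\oplus K$ with $SR=S$. We must show
\[
\dim S-d\le \dim B-\dim(B\cap R).
\]
Substituting $\dim B=n$ and the value of $d$, this is equivalent to
\[
\dim S+\dim(B\cap R)\le 2n-\dim(A\cap B)=\dim(A+B).
\]
The last equality is Grassmann's formula. It therefore suffices to show that the sum $S+(B\cap R)$ is direct. That sum lies inside $A+B$, since $S\subseteq A$ and $B\cap R\subseteq B$, so directness gives the inequality at once.

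For directness, set $F=K(R)$, which is a field. By Proposition~\ref{proposition linear}(ii), applied with $\dim S\le n$, we have $aF\subseteq S$ for every $a\in S$, so $S$ is closed under multiplication by $F$. Clearly $B\cap R\subseteq R\subseteq F$. Now suppose $x\in S\cap F$ with $x\ne 0$. Then $x^{-1}\in F$, and closure of $S$ under multiplication by $F$ gives
\[
1=x^{-1}x\in S\subseteq A.
\]
This contradicts $A\cap K=\{0\}$, which follows from the hypothesis $(A+B)\cap K=\{0\}$. Hence $S\cap F=\{0\}$, and in particular $S\cap(B\cap R)=\{0\}$.

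Combining the two steps, $\dim S+\dim(B\cap R)=\dim\big(S\oplus(B\cap R)\big)\le\dim(A+B)$. This is the required inequality, and Theorem~\ref{LinPartial} then shows that $A$ is partially matched to $B$ with defect $d$. The theorem does apply here, because $1\notin B$, which is again a consequence of $(A+B)\cap K=\{0\}$.

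The step I expect to need the most care is the key observation that $S$ contains no nonzero element of $K(R)$. It is also the only place where the hypothesis $A\cap K=\{0\}$ is used, and without that observation a naive bound like $\dim(B\cap R)\le[F:K]-1$ is too weak. I should also confirm that Proposition~\ref{proposition linear} gives closure of $S$ under all of $K(R)$, not just under $R$, and that $R$ sits inside $K(R)$. Both follow from part (ii) and the definition of $K(R)$.
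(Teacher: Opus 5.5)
Your proof is correct and follows essentially the same route as the paper: you show that $S$ meets $B\cap R$ trivially, since a nonzero common element would force $1\in S\subseteq A$. You then bound $\dim S+\dim(B\cap R)$ by $\dim(A+B)$ via Grassmann's formula and invoke Theorem~\ref{LinPartial}. The only cosmetic difference is that you use part (ii) of Proposition~\ref{proposition linear} to get $S\cap K(R)=\{0\}$, whereas the paper applies part (i) directly to an element of $R\cap S$ to get $R\cap S=\{0\}$.
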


\begin{proof}
Let \(S\subseteq A\) and \(R\subseteq B\oplus K\) be nonzero $K$-subspaces with \( SR = S\). We claim that
\[
\dim S - d \;\leq\; \dim \big(B/(B\cap R)\big).
\]
First, note that \(R\cap S = \{0\}\). Indeed, if there existed a nonzero \(a \in R\cap S\), then by Proposition~\ref{proposition linear} we would have
\(aK(a) \subseteq S\), implying \( 1 \in S \subseteq A \), contrary to our assumption that \((A + B) \cap K = \{0\}\).

Using the above, together with Grassmann's dimension formula and the fact that 
\[ 
S + (B\cap R) \subseteq A + B,
\] 
we get 
\begin{align*}
\dim S + \dim (B \cap R) = \dim \big( S + (B \cap R) \big) &\leq \dim (A+B) \\ 
&= \dim A + \dim B - \dim (A\cap B) \\
&= \dim B + d.
\end{align*}
Therefore
\[
\dim S - d
\;\leq\;
\dim B - \dim (B\cap R) = \dim \big(B/(B\cap R)\big).
\]
By Theorem~\ref{LinPartial}, \( A \) is partially matched to \( B \) with defect \( d \).
\end{proof}

Below we define Chowla and $d$-defective Chowla subspaces.  We will then explain how they fit into the theory of partial matchability.

\begin{definition}
Let \( K \subsetneq L \) be a field extension and consider a \( K \)-subspace \( B \) of \( L \) with \( \dim B = n \). 
\begin{itemize}
\item $B$ is called a \emph{Chowla subspace} if 
\[
[K(x) : K] \geq n+1,
\]
for every nonzero \( x \in B \). 
\item Suppose \( d \) is an integer with \( 0 \leq d < n \). Then $B$ is called a \emph{$d$-defective Chowla subspace} if there exists a $K$-subspace \( B_0 \subseteq B \) with \( \dim B_0 \geq n - d \) such that 
\[
[K(x) : K] \geq n+1
\]
for every nonzero \( x \in B_0 \).  
\end{itemize}
\end{definition}

Note that a $d$-defective Chowla subspace with $d=0$ is simply a Chowla subspace.

The next corollary can be viewed as an extension of Corollary~3.6 of \cite{Aliabadi 5}. We remark that the notion of a Chowla set goes back to \cite{Hamidoune 2}, and that its relevance to matching theory was first observed in \cite{Hamidoune} and subsequently developed further in \cite{Aliabadi 5}.

\begin{corollary}
Let \( K \subsetneq L \) be a field extension and let \( 0 \leq d < n \) be integers. Suppose \( A \) and \( B \) are \( n \)-dimensional \( K \)-subspaces of \( L \) with \( 1 \notin B \). Suppose further that \( B \) is a $d$-defective Chowla subspace. Then \( A \) is partially matched to \( B \) with defect \( d \).
\end{corollary}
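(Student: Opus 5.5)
We apply Theorem~\ref{LinPartial}, following the pattern of the first corollary above. Let \( S \subseteq A \) and \( R \subseteq B \oplus K \) be nonzero \( K \)-subspaces with \( SR = S \). We must show
\[
\dim S - d \le \dim\big(B/(B\cap R)\big) = n - \dim(B\cap R),
\]
and since \( \dim S \le n \), it suffices to prove \( \dim(B \cap R) \le d \).

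Fix a subspace \( B_0 \subseteq B \) with \( \dim B_0 \ge n-d \) such that every nonzero \( x \in B_0 \) satisfies \( [K(x):K] \ge n+1 \). The key claim is that \( B_0 \cap R = \{0\} \). Suppose \( x \in B_0 \cap R \) is nonzero. Since \( SR = S \) and \( \dim S \le \dim A = n \), Proposition~\ref{proposition linear}(i) gives \( [K(x):K] \le n \). This contradicts the Chowla property of \( B_0 \).

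Because \( B_0 \) and \( B \cap R \) are both subspaces of \( B \) and meet only in \( 0 \), Grassmann's formula yields
\[
\dim B_0 + \dim(B\cap R) = \dim\big(B_0 + (B\cap R)\big) \le n .
\]
Hence \( \dim(B\cap R) \le n - \dim B_0 \le d \), which is the bound we needed. Theorem~\ref{LinPartial} then shows that \( A \) is partially matched to \( B \) with defect \( d \).

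No step here is a serious obstacle. The one point to check is that Proposition~\ref{proposition linear} applies with the bound \( n \), which holds because \( S \subseteq A \). The hypothesis \( 1 \notin B \) is used only to make Theorem~\ref{LinPartial} applicable.
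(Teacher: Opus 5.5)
Your proof is correct and essentially identical to the paper's. Both show $R\cap B_0=\{0\}$ via Proposition~\ref{proposition linear}, deduce $\dim(B\cap R)+\dim B_0\le n$ by Grassmann's formula, and conclude $\dim S-d\le n-d\le n-\dim(B\cap R)$ before invoking Theorem~\ref{LinPartial}.
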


\begin{proof}
Since \( B \) is a $d$-defective Chowla subspace, there exists a subspace 
\( B_{0} \subseteq B \) with \( \dim B_{0} \geq n-d \) such that for every 
nonzero \( x \in B_{0} \),
\[
[K(x) : K] \geq n+1.
\]
Let \( S \subseteq A \) and \( R \subseteq B \oplus K \) be nonzero $K$-subspaces with \( SR = S\). We claim that \( R \cap B_{0} = \{ 0 \} \). Indeed, if this did not hold, we could choose a nonzero \( x \in R \cap B_{0} \), and then, by Proposition~\ref{proposition linear}, we would have  \( [K(x) : K] \leq n \) since \( x \in R \).  On the other hand, since \( x \in B_{0} \), we would have \( [K(x) : K] \geq n+1 \), a contradiction.  

Bearing in mind that $B\cap R$ and $B_{0}$ are contained in $B$ and that $(B\cap R)\cap B_{0}=\{0\}$, we have
\[
\dim(B\cap R)+\dim B_{0}\le n.
\]
It follows that
\begin{align*}
\dim S - d \leq n - d \leq \dim B_{0} 
\leq n - \dim (B \cap R)
=\dim \big(B/(B\cap R)\big).
\end{align*}
Applying Theorem~\ref{LinPartial}, we conclude that 
\( A \) is partially matched to \( B \) with defect \( d \).
\end{proof}

The following theorem turns the problem of computing \( \delta(A,B)\) into a simple maximization involving intersection dimensions. It also proves a corrected version of Conjecture 4.5 in \cite{Aliabadi 4} (see Remark \ref{ConjRem} below).

\begin{theorem}\label{DefTh}
Let $K\subsetneq L$ be a field extension, and let $A,B\subseteq L$ be $n$-dimensional $K$-subspaces with \( 0 < n < \infty \) and \( 1\notin B \). For a basis \( \mathcal{A}=\{a_1,\dots,a_n\}\) of \( A \) and for \( i\in [n]\),
define the $K$-subspace
\[
T_i(\mathcal{A}) \;=\; a_i^{-1}A\cap B \;\subseteq\; B.
\]
Define
\[
\Omega(\mathcal{A})
\;=\;
\max_{J\subseteq [n]}
\Bigl(
|J|+\dim \bigcap_{i\in J}T_i(\mathcal{A}) - n
\Bigr),
\]
where the intersection is understood to equal \( B \) when \( J = \emptyset \).
Then:
\begin{enumerate}
\item[\textup{(i)}] \( \Omega(\mathcal{A}) \) equals the least integer $d\ge 0$
for which there exists a basis $\mathcal{B}=\{b_1,\dots,b_n\}$ of $B$ such that
$\mathcal{A}$ is partially matched to $\mathcal{B}$ with defect $d$.

\item[\textup{(ii)}] Consequently,
\[
\delta(A,B)\;=\;\max_{\mathcal{A}} \Omega(\mathcal{A}),
\]
where \( \mathcal{A} \) runs over all ordered bases of \( A \).
\end{enumerate}
\end{theorem}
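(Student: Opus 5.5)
The plan is to prove (i) by a duality argument that mirrors the second half of the proof of Theorem~\ref{LinPartial}, and then to deduce (ii) from (i) and the definitions. Fix a basis $\mathcal{A}$ of $A$, write $T_i=T_i(\mathcal{A})$, and work in the dual space $B^*$ with the family
\[
\mathcal{W}=\big(T_1^\perp,\dots,T_n^\perp\big).
\]
The first step is a bridge lemma: for an integer $0\le d\le n$, there is a basis $\mathcal{B}$ of $B$ with $\mathcal{A}$ partially matched to $\mathcal{B}$ with defect $d$ if and only if $\mathcal{W}$ admits a free partial transversal with defect $d$.

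For the ``if'' direction, I would repeat the end of the proof of Theorem~\ref{LinPartial} verbatim. Take independent $f_i\in T_i^\perp$ for $i\in P$, extend them to a basis of $B^*$, and let $\mathcal{B}$ be the dual basis. Then $T_i\subseteq\ker f_i=\langle b_j:j\neq i\rangle$ for each $i\in P$.

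For the ``only if'' direction, suppose $\mathcal{A}$ is partially matched to $\mathcal{B}$ with defect $d$ via $P$. Let $\{f_1,\dots,f_n\}$ be the basis of $B^*$ dual to $\mathcal{B}$. For $i\in P$ we have $T_i\subseteq\langle b_j:j\ne i\rangle=\ker f_i$, so $f_i\in T_i^\perp$. Hence $\{f_i:i\in P\}$ is a free partial transversal of $\mathcal{W}$ with defect $d$, with $\theta(f_i)=i$.

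The next step applies Theorem~\ref{Linear Partial} to $\mathcal{W}$. By the two annihilator identities,
\[
\dim\sum_{i\in J}T_i^\perp=\dim\Big(\bigcap_{i\in J}T_i\Big)^\perp=n-\dim\bigcap_{i\in J}T_i.
\]
For $J=\emptyset$ both sides are $0$, consistent with the convention that the intersection is $B$. So $\mathcal{W}$ has a free partial transversal with defect $d$ if and only if
\[
|J|+\dim\bigcap_{i\in J}T_i-n\le d\quad\text{for all } J\subseteq[n],
\]
that is, if and only if $d\ge\Omega(\mathcal{A})$. The term with $J=\emptyset$ equals $0$, and each term is at most $n$, so $0\le\Omega(\mathcal{A})\le n$. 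Consequently $\Omega(\mathcal{A})$ is itself an admissible value of $d$. Combined with the bridge lemma, this makes $\Omega(\mathcal{A})$ the least $d\ge0$ for which some $\mathcal{B}$ works, which is (i).

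For (ii), note that $A$ is partially matched to $B$ with defect $d$ exactly when every ordered basis $\mathcal{A}$ admits a suitable $\mathcal{B}$. By (i) together with Remark~\ref{Defect Property}, this holds exactly when $d\ge\Omega(\mathcal{A})$ for all $\mathcal{A}$. Taking the least such $d$ gives $\delta(A,B)=\sup_{\mathcal{A}}\Omega(\mathcal{A})$. The supremum is a maximum because the values $\Omega(\mathcal{A})$ are integers lying in $[0,n]$.

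I expect no serious obstacle. The points needing care are the ``only if'' direction of the bridge, which uses the dual basis, and the $J=\emptyset$ convention in the annihilator count. The hypothesis $1\notin B$ is needed only so that $\delta$ is defined; it plays no role in the argument itself.
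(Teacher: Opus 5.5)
Your proposal is correct and follows essentially the same route as the paper. Both arguments use the dual-basis equivalence $T_i\subseteq\langle b_j : j\neq i\rangle \iff f_i\in T_i^\perp$, apply the defect form of Rado's theorem (Theorem~\ref{Linear Partial}) to $(T_1^\perp,\dots,T_n^\perp)$ in $B^*$, and use the annihilator identities to turn the Rado condition into $d\ge\Omega(\mathcal{A})$, with (ii) then read off from (i). Your explicit two-way bridge to free partial transversals, and your remark that $\max_{\mathcal{A}}\Omega(\mathcal{A})$ is attained because its values are integers in $[0,n]$, make precise two small points the paper leaves implicit.
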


\begin{proof}
We first prove (i). Fix \( \mathcal{A}=\{a_1,\dots,a_n\} \) and write \( T_i=T_i(\mathcal{A})\). Let \( \mathcal{B}=\{b_1,\dots,b_n\} \) be a basis of \( B \) and let \( \{f_1,\dots,f_n\} \) be the corresponding dual basis of \(B^*\). For each $i$, the containment
\[
T_i \subseteq \langle b_1,\dots,b_{i-1},b_{i+1},\dots,b_n\rangle
\]
holds if and only if $f_i$ vanishes on $T_i$, i.e.,
\[
f_i\in T_i^\perp \subseteq B^*.
\]
Thus the existence of a basis \( \mathcal{B} \) for which \( \mathcal{A} \)  is partially matched to \( \mathcal{B} \) with defect \( d \) is equivalent to the existence of a basis \( \{f_1,\dots,f_n\} \)  of \( B^*\)  such that at least \( n-d \) of the vectors \( f_i \) belong to the corresponding subspaces \( T_i^\perp \).

Let \( U_i = T_i^\perp \).  Applying Theorem~\ref{Linear Partial} to the family \( (U_1,\dots,U_n) \) in \( B^* \), we find that the minimum number of indices that must be discarded in order for there to exist a linearly independent system \( f_i\in U_i \) from the remaining indices equals
\[
\max_{J\subseteq [n]}
\Bigl(|J|-\dim \sum_{i\in J}U_i
\Bigr).
\]
Using the annihilator identities given above Theorem~\ref{LinPartial}, we obtain
\begin{align*}
|J|-\dim \sum_{i\in J}U_i
=
|J|-\dim \sum_{i\in J}T_i^\perp 
&=
|J|-\dim\Bigl(\bigcap_{i\in J}T_i\Bigr)^\perp \\
&=
|J|-\Bigl(n-\dim \bigcap_{i\in J}T_i \Bigr) \\
&=
|J|+\dim \bigcap_{i\in J}T_i - n.
\end{align*}
Maximizing over $J$ yields $\Omega(\mathcal{A})$. Therefore, for the fixed basis \( \mathcal{A} \), the least integer \( d \geq 0 \) such that  \( (U_1,\dots,U_n) \) has a free partial transversal with defect \( d \) is \( \Omega(\mathcal{A}) \).  It now follows from the first paragraph above that \( \Omega(\mathcal{A}) \) is the least integer \( d \geq 0\) such that \( \mathcal{A} \) is partially matched to an ordered basis of \( B \) with defect \( d \). Thus \textup{(i)} is proved.

For \textup{(ii)}, let \( t = \delta(A,B) \). By definition, $t$ is the least integer $d$ such that for every ordered basis $\mathcal{A}$ of $A$ there exists an ordered basis of $B$ witnessing partial matchability with defect $d$. By \textup{(i)}, the least defect required for a given $\mathcal{A}$ equals $\Omega(\mathcal{A})$, hence $t$ must satisfy $t \ge \Omega(\mathcal{A})$ for all $\mathcal{A}$, so $t\ge \max_{\mathcal{A}}\Omega(\mathcal{A})$.
Conversely, if $t'=\max_{\mathcal{A}}\Omega(\mathcal{A})$, then $t' \ge \Omega(\mathcal{A})$ for each ordered basis $\mathcal{A}$, and so \textup{(i)} supplies, for each \( \mathcal{A} \), an ordered basis of $B$ witnessing partial matchability with defect $t'$ (see Remark~\ref{Defect Property}). Hence $\delta(A,B) = t \le t'$. Therefore $\delta(A,B)=\max_{\mathcal{A}}\Omega(\mathcal{A})$.
\end{proof}

\begin{remark} \label{ConjRem}
Let \(A,B,K,L\), and \(n\) be as in Theorem~\ref{DefTh}.  For any basis \( \mathcal{A}=\{a_1,\ldots,a_n\} \) of \(A\), Theorem~\ref{DefTh} shows that failure of matchability for \( \mathcal{A} \) is measured by the maximum Hall--Rado excess
\[
\Omega(\mathcal{A})
=
\max_{J\subseteq[n]}
\Big(
|J|
+
\dim\bigcap_{i\in J}(a_i^{-1}A\cap B)
-n
\Big),
\]
and that the minimum defect \( \delta(A,B) \) equals \( \max_{\mathcal{A}}\Omega(\mathcal{A}) \), where the maximum is taken over all ordered bases \( \mathcal{A} \) of \(A\).

Conjecture~4.5 in \cite{Aliabadi 4} instead considers, for a basis \( \mathcal{A}=\{a_1,\ldots,a_n\} \), the quantity
\[
D_{\mathcal{A}}(B)
=
\max\Big\{
|J|:
J\subseteq[n],\
\dim\bigcap_{i\in J}(a_i^{-1}A\cap B)>n-|J|
\Big\},
\]
and defines
\[
D(A,B)=\max_{\mathcal{A}}D_{\mathcal{A}}(B).
\]
Thus \(D_{\mathcal{A}}(B)\) records the cardinality of the largest index set violating the matching criterion, whereas \(\Omega(\mathcal{A})\) records the maximum amount by which the matching criterion fails. These two parameters need not be equal.  For example, let
\[
K=\mathbb{Q},\qquad
L=\mathbb{Q}(\sqrt{2},\sqrt{3}),\qquad
A=\mathbb{Q}(\sqrt{2}),\qquad
B=\langle\sqrt{2},\sqrt{3}\rangle_{\mathbb{Q}}.
\]
Here we have 
\[ n = \dim_{\mathbb{Q}} A = \dim_{\mathbb{Q}} B = 2, \qquad 1\notin B, \qquad  \langle AB\rangle \neq A.
\] 
(The last condition is a hypothesis of Conjecture~4.5 in \cite{Aliabadi 4}.)  For every basis \( \mathcal{A}=\{a_1,a_2\} \) of \(A\), each \(a_i\) is a nonzero element of the field \(A\), and hence \( a_i^{-1}A = A \).  Moreover, \( A\cap B=\langle\sqrt{2}\rangle_{\mathbb{Q}}\), so
\[
a_i^{-1}A\cap B
=
\langle\sqrt{2}\rangle_{\mathbb{Q}}
\qquad\text{for }i=1,2.
\]
It follows that
\[
D_{\mathcal{A}}(B)=2,
\qquad
\Omega(\mathcal{A})=1.
\]
Consequently,
\[
D(A,B)=2
\qquad\text{and}\qquad
\delta(A,B)=1.
\]
Thus the cardinality of the largest violating index set does not, in general, equal the minimum defect.

Following \cite{Aliabadi 4}, let \(M(A,B)\) denote the largest integer \(m\) for which \(A\) is partially matched to \(B\) with defect \(n-m\). In this example, 
\[
M(A,B)=n-\delta(A,B)=1,
\]
whereas the formula proposed in Conjecture~4.5 would give
\[
n-D(A,B)=2-2=0.
\]

In general, Theorem~\ref{DefTh} provides the corrected formula
\[
M(A,B) = n - \max_{\mathcal{A}}\Omega(\mathcal{A}),
\]
in which the maximum cardinality of a violating index set is replaced by the maximum Hall--Rado excess.
\end{remark}

\section{Structure theorems for partially matched subspaces}\label{LinStructure}

Below, we prove a decomposition theorem for pairs of subspaces with deficiency exceeding a prescribed threshold. This result may be regarded as a refinement of Theorem~3.3 in \cite{Aliabadi 6}.

\begin{theorem}\label{LinStrucTh}
Let \(K\subsetneq L\) be a field extension and let \(A,B\subseteq L\) be $n$-dimensional $K$-subspaces with \( 0 < n < \infty \) and \(1\notin B\). Let \(\ell\ge 0\) be an integer. Then \(\delta(A,B)>\ell\) if and only if there exists a nonzero $K$-subspace \(R\subseteq B\) such that \(A\) and \(B\) admit decompositions
\[
A = S \oplus Y,
\qquad
B = R \oplus Z,
\]
where $\dim Y<\dim R -\ell$ and
\[
S=\bigoplus_{i=1}^m a_iK(R)
\]
for some \( a_1, \ldots ,a_m \in S\). 
\end{theorem}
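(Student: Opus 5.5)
Both directions will go through the max-formula corollary: $\delta(A,B)$ is the maximum of $\dim S-\dim\bigl(B/(B\cap R)\bigr)$ over nonzero $S\subseteq A$ and $R\subseteq B\oplus K$ with $SR=S$. Proposition~\ref{proposition linear} will convert the relation $SR=S$ into the statement that $S$ is a $K(R)$-subspace.

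\emph{Forward direction.} Suppose $\delta(A,B)>\ell$. The corollary supplies nonzero $S_0\subseteq A$ and $R_0\subseteq B\oplus K$ with $S_0R_0=S_0$ and
\[
\dim S_0-\dim B+\dim(B\cap R_0)>\ell .
\]
Set $R=B\cap R_0$ and choose a complement $Z$ of $R$ in $B$, so $B=R\oplus Z$.

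We must check that $R\neq 0$. If $R=0$, the displayed inequality gives $\dim S_0-n>\ell\ge 0$, which is impossible since $S_0\subseteq A$.

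Next, since $R\subseteq R_0$, we have $K(R)\subseteq K(R_0)$. By Proposition~\ref{proposition linear}(ii), $aK(R_0)\subseteq S_0$ for every $a\in S_0$. Hence $S_0$ is closed under multiplication by $K(R_0)$, and so $S_0$ is a vector space over the field $K(R)$. Choosing a $K(R)$-basis $a_1,\dots,a_m$ of $S_0$ gives $S:=S_0=\bigoplus_{i=1}^m a_iK(R)$, where the sum is direct as $K$-spaces because the $a_i$ are $K(R)$-independent.

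Now take $Y$ to be a $K$-complement of $S$ in $A$. Then
\[
\dim Y=n-\dim S<n-(n-\dim R)-\ell=\dim R-\ell ,
\]
which is the required bound.

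\emph{Reverse direction.} Assume the decomposition exists. Apply the corollary to the pair $(S,R')$ with $R'=R\oplus K\subseteq B\oplus K$; the sum is direct because $1\notin B$. Since $S$ is a $K(R)$-space containing each $a_i$, we get $SR'\subseteq S$. Since $1\in R'$, also $S\subseteq SR'$, so $SR'=S$.

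Moreover $B\cap R'=R$. Indeed, if $r+c\in B$ with $r\in R$ and $c\in K$, then $c\in B$, and $1\notin B$ forces $c=0$. Therefore
\[
\delta(A,B)\ \ge\ \dim S-(n-\dim R)=\dim R-\dim Y>\ell .
\]

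The main points needing care are the nonemptiness of $R$ in the forward direction and the passage from $S_0R_0=S_0$ to $K(R)$-closure. The latter is exactly what Proposition~\ref{proposition linear}(ii) provides, so once that result is invoked the argument is routine bookkeeping. One small check remains: $K(R)$ must be a finite extension so that the basis $a_1,\dots,a_m$ is finite. This holds because $S$ is finite-dimensional over $K$, or directly by Proposition~\ref{proposition linear}(ii).
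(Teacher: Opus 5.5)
Your proof is correct and follows the paper's argument essentially step for step. In the forward direction you take $R=B\cap R_0$, rule out $R=\{0\}$ by the dimension count, and use Proposition~\ref{proposition linear}(ii) with $K(R)\subseteq K(R_0)$ to make $S$ a $K(R)$-space. In the reverse direction you use $R\oplus K$ together with $B\cap(R\oplus K)=R$. The only difference is that you invoke the max-formula corollary instead of Theorem~\ref{LinPartial} directly, and the two are equivalent.

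One small addition is needed in the reverse direction. The corollary only ranges over nonzero $S$, so you should note that $S\neq\{0\}$. This is immediate from $\dim Y<\dim R-\ell\le n$.
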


\noindent\emph{Note: }The inequality \( \dim Y<\dim R-\ell \) ensures that \( S\neq\{0\} \).  The value of \( m \) in the statement is \( \dim_{K(R)}S = (\dim_K S)/[K(R):K] \).

\begin{proof}
Assume that \(\delta(A,B)>\ell\). Then \(A\) is not partially matched to \(B\) with defect \(\ell\). From Theorem~\ref{LinPartial}, there exist nonzero $K$-subspaces \( S\subseteq A \) and \( R\subseteq B\oplus K \) such that
\( SR = S \) and
\begin{equation}\label{eq:thm41-ineq}
\dim S >\dim \bigl(B/(B\cap R)\bigr)+\ell.
\tag{3}
\end{equation}
Let \( R_0 = B\cap R \). Then \( R_0\neq \{0\} \), since otherwise \( \dim\big(B/(B\cap R)\big)=\dim B=\dim A \geq \dim S\), contradicting \eqref{eq:thm41-ineq}.

By Proposition~\ref{proposition linear}, for every \( a\in S \) we have \( aK(R)\subseteq S \). Since \( R_0\subseteq R \), it follows that \( K(R_0)\subseteq K(R) \), hence
\( aK(R_0)\subseteq S \) for all  \( a\in S \). Thus \( S \) can be viewed as a $K(R_0)$-vector space. Its $K(R_0)$-dimension is finite, since \( \dim_{K(R_0)}S \cdot [K(R_0):K] = \dim_K S \) and the latter is bounded above by \( n \).  Hence we can write 
\[
S=\bigoplus_{i=1}^m a_iK(R_0)
\]
for some \( a_1, \ldots ,a_m \in S\), with \( m = \dim_{K(R_0)}S = (\dim_K S)/[K(R_0):K] \). Choose $K$-subspaces \( Y \) of \( A \) and \( Z \) of \( B \) such that \( A=S\oplus Y \) and \( B=R_0\oplus Z \). Using $\dim \bigl(B/(B\cap R)\bigr)=\dim B-\dim R_0$, we rewrite inequality \eqref{eq:thm41-ineq} as
\[
\dim S>\dim B-\dim R_0+\ell.
\]
Since $\dim A=\dim B$, this implies
\[
\dim Y=\dim A-\dim S<\dim R_0-\ell,
\]
which is the desired conclusion.

Conversely, assume that $A$ and $B$ admit decompositions as in the statement. Define
\[
\widetilde{R}=R\oplus K \ \subseteq\ B\oplus K .
\]
Since $1\notin B$ and \( R \subseteq B \), it follows that $B\cap \widetilde{R}=R$. Also, we have $S\widetilde{R} = S$, on account of the fact that \( 1 \in \widetilde{R} \) and \( S \) is stable under multiplication by the elements of \( K(R) \).

The inequality $\dim Y < \dim R-\ell$ implies
\[
\dim S+\dim R-\ell > \dim S+\dim Y = \dim A = \dim B .
\]
Therefore 
\[
\dim S-\ell > \dim B-\dim R
= \dim(B/R) 
= \dim\bigl(B/(B\cap \widetilde{R})\bigr).
\]
By Theorem~3.3, $A$ is not partially matched to $B$ with defect $\ell$, and hence
$\delta(A,B)>\ell$.
This completes the proof.
\end{proof}

\begin{definition} \label{smallest finite nontrivial degree}
For a field extension \( K \subsetneq L \) with at least one nontrivial intermediate field of finite $K$-dimension, we define \( n_0(K, L) \) to be the smallest degree of such an extension, i.e.,
\[
n_0(K, L) = \min_F [F : K],
\]
where the minimum is taken over all intermediate fields \( K \subsetneq F \subseteq L \) such that \( [F : K] < \infty \). 
\end{definition}

In the theorem below, we give a necessary and sufficient condition for the existence of $K$-subspaces \(A,B\subseteq L\) of prescribed dimension with \(1\notin B\) and with deficiency \(\delta(A,B)\) exceeding a given threshold \(\ell\).

\begin{theorem}\label{ExistancePartial}
Let \( K \subsetneq L \) be a field extension with at least one proper nontrivial finite-dimensional intermediate field. Let \( \ell \) and \( n \) be integers such that \( \ell\geq 0 \) and \( n_0(K,L) \leq n < [L:K] \). Then there exist $n$-dimensional $K$-subspaces \( A,B \subseteq L \) with \( 1 \notin B \) and \( \delta(A,B)>\ell \) if and only if there exists an intermediate field \( K \subsetneq F \subsetneq L \) with \( [F:K] \leq n \) and \( [F:K] \nmid (n+j) \) for all \( j\in [\ell+1] \).
\end{theorem}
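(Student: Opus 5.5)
Both directions will go through Theorem~\ref{LinStrucTh}, which reduces the inequality $\delta(A,B)>\ell$ to the existence of a decomposition $A=S\oplus Y$, $B=R\oplus Z$ with $S$ a $K(R)$-subspace and $\dim Y<\dim R-\ell$.

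\emph{Necessity.} Suppose $A,B$ are $n$-dimensional, $1\notin B$, and $\delta(A,B)>\ell$. Theorem~\ref{LinStrucTh} gives a nonzero $R\subseteq B$ with such decompositions. Put $F=K(R)$ and $q=[F:K]$. By Proposition~\ref{proposition linear}, $q$ is finite, $q\le n$, and $\dim S=mq$ with $m\ge 1$. Since $R\neq 0$ we have $F\neq K$, and since $q\le n<[L:K]$ we have $F\neq L$; so $F$ is a proper nontrivial intermediate field with $[F:K]\le n$.

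For the divisibility condition, set $r=\dim R$. Then $R\subseteq F\cap B$, and $1\in F\setminus B$, so $r\le q-1$. Also $\dim Y=n-mq<r-\ell\le q-1-\ell$, which gives
\[
mq<n+q-\ell\qquad\text{and}\qquad mq>n-q+1+\ell .
\]
I now want to rule out $q\mid(n+j)$ for each $j\in[\ell+1]$. Suppose $n+j=kq$ with $1\le j\le \ell+1$. Since $mq\le n<kq$, we get $m\le k-1$. On the other hand, $(m+1)q>n+1+\ell\ge n+j=kq$ gives $m\ge k$. This contradiction shows $q\nmid(n+j)$ for all $j\in[\ell+1]$.

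\emph{Sufficiency.} Let $F$ be an intermediate field with $q=[F:K]\le n$ and $q\nmid(n+j)$ for all $j\in[\ell+1]$. Write $n=mq+s$ with $0\le s<q$ and $m\ge1$. The condition says that none of $n+1,\dots,n+\ell+1$ is a multiple of $q$, so the next multiple $(m+1)q$ exceeds $n+\ell+1$. Hence $q-s\ge \ell+2$, that is, $s\le q-\ell-2$.

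I will build the pair as follows:
\begin{itemize}
\item[(a)] $S=\bigoplus_{i=1}^m a_iF$, a direct sum of $m$ independent $F$-cosets;
\item[(b)] $A=S\oplus Y$, with $Y$ any $s$-dimensional complement;
\item[(c)] $R$ an $(s+\ell+1)$-dimensional subspace of $F$ not containing $1$;
\item[(d)] $B=R\oplus Z$, with $Z$ chosen so that $\dim B=n$ and $1\notin B$.
\end{itemize}

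For (a), since $mq\le n<[L:K]$, we can choose $m$ elements $a_i$ that are independent over $F$; this is possible because $\dim_F L>m$ (if $\dim_F L\le m$ then $[L:K]\le mq\le n$, a contradiction). For (b), finiteness of $[L:K]$ is not needed; I only need $L$ to have $K$-dimension greater than $n$, which holds.

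For (c), the bound $s+\ell+1\le q-1$ lets me take $R$ inside a complement of $K$ in $F$, so $1\notin R$. Then $K(R)\subseteq F$, and $S$ is a $K(R)$-space. For Theorem~\ref{LinStrucTh} I need $S$ expressed with $K(R)$ rather than $F$. I will handle this by choosing $R$ to contain a generator set of $F$ when possible, or else by noting that $S$ is a direct sum of $K(R)$-cosets anyway: take a $K(R)$-basis of $S$.

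For (d), I extend $R$ to $B$ by adding $n-\dim R$ vectors while avoiding $1$. This is possible because $\dim B=n<[L:K]$: choose $B$ inside a hyperplane complementary to $K$ that contains $R$.

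Finally, $\dim Y=s<\dim R-\ell=s+1$, so Theorem~\ref{LinStrucTh} yields $\delta(A,B)>\ell$.

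The step I expect to be the main obstacle is the arithmetic in the necessity direction: turning the inequality $\dim Y<\dim R-\ell$, together with $\dim R\le q-1$, into the non-divisibility statement. The argument above should settle it.
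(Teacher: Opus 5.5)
Your proposal is correct and follows essentially the same route as the paper. Necessity goes through Theorem~\ref{LinStrucTh} with $F=K(R)$, where the bound $\dim R\le[F:K]-1$ forces $n+1,\dots,n+\ell+1$ to lie strictly between $\dim S$ and $\dim S+[F:K]$. Sufficiency uses the same $S\oplus Y$, $R\oplus Z$ construction; you take $\dim R=s+\ell+1$ and place $B$ inside a complement of $K$ containing $R$, while the paper takes $R$ a hyperplane of $F$ and $Z\cap F=\{0\}$, which is a cosmetic difference.

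One small wording fix: $F\neq K$ does not follow from $R\neq 0$ alone. It follows from $R\neq 0$ together with $1\notin B$, since a nonzero $K$-subspace of $K$ is $K$ itself, which contains $1$.
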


\begin{proof}
Assume that there exist $n$-dimensional $K$-subspaces \( A,B\subseteq L \) with \( 1\notin B \) such that \( \delta(A,B)>\ell \). By Theorem~\ref{LinStrucTh}, there exists a nonzero $K$-subspace $R\subseteq B$ such that $A$ and $B$ admit decompositions
\[
A = S \oplus Y,
\qquad
B = R \oplus Z,
\]
where  $\dim Y < \dim R - \ell$ and \( S \) is stable under multiplication by the elements of \( K(R) \).  Define
\[
\widetilde{R} = R + K \subseteq B \oplus K .
\]
Then \( S\widetilde{R} = S\), so Proposition~\ref{proposition linear} applies to the pair \((S,\widetilde{R})\). In particular,
\[
[K(R):K] = [K(\widetilde{R}):K] \leq n.
\]
We also have \( [K(R):K] > 1 \), since \( \{ 0 \} \neq R \subseteq B \) and \( 1 \notin B \).

Define $F=K(R)$ and write $[F:K]=m$. We just saw that 
\[ 1< m \leq n < [L:K], 
\] 
and now we wish to show that \( m \nmid (n+j) \) for all \( j \in [\ell + 1] \).  Reapplying Proposition \ref{proposition linear}, we obtain $\dim S = mq$ for some integer $q\ge 1$.
Set $r=\dim Y$. Then
\[
r=\dim Y=\dim A-\dim S=n-mq.
\]

Moreover, since $R\subseteq B$ and $1\notin B$, we have $1\notin R$, so $R$ is a
proper $K$-subspace of $F$, which implies $\dim R\le m-1$. Combining this with $\dim Y<\dim R-\ell$ yields
\[
r<\dim R-\ell\le (m-1)-\ell,
\]
and hence  $r\le m-\ell-2$. Therefore, for each \( j\in [\ell+1] \), we have
\[
n+j=mq+(r+j)
\]
with
\[
1\le r+j\le (m-\ell-2)+(\ell+1)=m-1.
\]
So the remainder of $n+j$ modulo $m$ is never $0$, i.e.,
\[
m\nmid(n+j)\qquad\text{for all }j\in[\ell+1].
\]
Thus \( F \) is the desired intermediate field.

Conversely, assume that there exists an intermediate field \( K\subsetneq F\subsetneq L \) with \( m=[F:K]\le n \) and \( m\nmid(n+j) \) for all \( j\in[\ell+1] \). Write \( n=mq+r \) using integers \( q, r\) with \( 0\le r<m \). The non-divisibility condition is equivalent to
\[
r+j\not\equiv 0 \pmod m \qquad \text{for all } j\in[\ell+1].
\]
Now, among the integers \( 1,2,\dots,m \) there is a unique value \( j_0 \) such that \( r+j_0\equiv 0\pmod m \), namely \( j_0=m-r \). Hence \( m-r\notin[\ell+1] \), i.e.,
\[
m-r>\ell+1,
\]
and therefore
\[
r\le m-\ell-2.
\]

Choose an $F$-subspace \( S\subseteq L \) of $F$-dimension \( q \); then \( \dim_K S=mq \). Next choose a $K$-subspace \( Y\subseteq L \) with \( \dim_K Y=r \) and \( S\cap Y=\{0\} \), and set \( A=S + Y \).  Then \( \dim_K A=mq+r=n \). Pick a $K$-hyperplane \( R\subseteq F \) with \( 1\notin R \), so \( \dim_K R=m-1 \).  Choose a $K$-subspace \( Z\subseteq L \) such that \( Z\cap F=\{0\} \) and \( \dim_K Z=n-(m-1)=n-m+1 \) (this can be done, since \( n < [L:K] \)), and set \( B=R + Z \). Then $\dim_K B=n$. We also have $1\notin B$ because if $1=x+z$ with $x\in R\subseteq F$ and $z\in Z$, then $z=1-x \in F$; hence $z\in Z\cap F=\{0\}$, forcing $x=1$, which contradicts $1\notin R$.

Since $K(R)\subseteq F$ and $S$ is an $F$-subspace, it follows that $S$ is a $K(R)$-subspace of $L$.  We can therefore write 
\[
S=\bigoplus_{i =1}^t a_iK(R)
\]
for some \( a_1,\ldots,a_t \in S \), with \( t = \dim_{K(R)} S = (\dim_K S)/[K(R):K] \).
Finally, using $r\le m-\ell-2$, we obtain
\[
\dim_K Y=r\le m-\ell-2<(m-1)-\ell=\dim_K R-\ell.
\]
Thus $A$ and $B$ admit decompositions $A=S\oplus Y$ and $B=R\oplus Z$ as in the statement of Theorem~\ref{LinStrucTh}.  We conclude that $\delta(A,B)>\ell$, completing the proof.
\end{proof}

Theorem~\ref{ExistancePartial} provides an existence criterion for pairs \((A,B)\) of $K$-dimension \(n\) with \(\delta(A,B)>\ell\), expressed via non-divisibility of the numbers \(n+1,\ldots,n+\ell+1\) by an intermediate degree \(m=[F:K] \leq n\). One may extract from this result a formula for the maximum achievable deficiency at dimension \(n\), expressed in terms of the set of intermediate degrees.  We present such a formula below, but we need a definition.

\begin{definition}
Let \( K \subsetneq L \) be a field extension with at least one proper nontrivial intermediate field \( F \) of finite $K$-dimension.  
\begin{itemize}
\item Define \( \mathcal{M}(K,L) \) to be the set of all degrees \( [F:K] \), where \( F \) is an intermediate field as above.  Note that \( n_0(K,L) \) from Definition~\ref{smallest finite nontrivial degree} equals \( \min \mathcal{M}(K,L) \) here.     
\item For each integer \( n \) satisfying \(n_0(K,L)\le n<[L:K]\), define
\[
\Delta(n)
=\max_{A,B} \delta(A,B),
\]
where \( A,B \) run over all pairs of $K$-subspaces of \( L \) with \( \dim A = \dim B = n \) and \( 1 \notin B \).
\end{itemize}
\end{definition}

\begin{theorem}\label{MaxDef}
Let $K\subsetneq L$ be a field extension with at least one proper nontrivial intermediate field
of finite $K$-dimension. For each integer \(n\) with \(n_0(K,L)\le n<[L:K]\), we have
\[
\Delta(n)
=
\max_{\substack{m\in \mathcal{M}(K,L)\\ m\le n}}
\Bigl(m-1-\bigl(n \bmod m\bigr)\Bigr).
\]
\end{theorem}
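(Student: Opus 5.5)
Write $D$ for the right-hand side of the claimed formula. The plan is to derive the identity $\Delta(n)=D$ from Theorem~\ref{ExistancePartial}. That theorem says, for each $\ell\ge 0$, that $\Delta(n)>\ell$ if and only if some $m\in\mathcal{M}(K,L)$ with $m\le n$ satisfies $m\nmid(n+j)$ for all $j\in[\ell+1]$; here $\Delta(n)$ is a maximum over a nonempty family of pairs, with values bounded by $n$. One technical point is that Theorem~\ref{ExistancePartial} requires $F\subsetneq L$. This is automatic, because $[F:K]=m\le n<[L:K]$. So the intermediate fields in that theorem correspond exactly to the elements $m\in\mathcal{M}(K,L)$ with $m\le n$.

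The key arithmetic step is the following. Fix $m$ and write $r=n\bmod m$. Then
\[
m\nmid(n+j)\ \text{for all } j\in[\ell+1] \iff \ell+1<m-r \iff \ell\le m-r-2 \iff \ell< m-1-r.
\]
This is the same computation as in the converse half of the proof of Theorem~\ref{ExistancePartial}: the unique $j\in[m]$ with $m\mid n+j$ is $j=m-r$, and $\ell+1<m$ is forced once $\ell+1<m-r$. Note that when $\ell+1\ge m$, some $j\in[\ell+1]$ gives a multiple of $m$ anyway. So the criterion becomes: $\Delta(n)>\ell$ if and only if $\ell<m-1-(n\bmod m)$ for some admissible $m$, that is, if and only if $\ell<D$.

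To finish, first note $D\ge 0$, since each term $m-1-(n\bmod m)$ is nonnegative and the index set is nonempty because $n\ge n_0(K,L)$. If $D=0$, the criterion shows $\Delta(n)>\ell$ fails for every $\ell\ge 0$, so $\Delta(n)=0$. If $D\ge 1$, take $\ell=D-1$ to get $\Delta(n)\ge D$, and take $\ell=D$ to get $\Delta(n)\not> D$. Hence $\Delta(n)=D$.

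The main obstacle is the range bookkeeping rather than any new idea: checking that the equivalence with $\ell< m-1-r$ holds for every $\ell\ge 0$, including the cases $r=m-1$ and $\ell\ge m$, and that the hypothesis $n<[L:K]$ secures properness of $F$. Otherwise the result is a direct corollary of Theorem~\ref{ExistancePartial}.
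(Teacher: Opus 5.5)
Your proof is correct and follows the paper's route. Both arguments reduce to Theorem~\ref{ExistancePartial} plus the observation that the least $j\ge 1$ with $m\mid(n+j)$ is $j=m-(n \bmod m)$. The paper proves the two inequalities separately (using $\ell=m-r-2$ for the lower bound and $\ell=\Delta(n)-1$ for the upper bound), while you combine them into the single equivalence $\Delta(n)>\ell \iff \ell<D$; the difference is only cosmetic.
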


\begin{proof}
Fix an \( m\in \mathcal{M}(K,L) \) with \( m \leq n \).  Note that \( m \geq 2 \).  We show first that \( \Delta(n) \geq m - 1 - (n \bmod m) \). Write \( n=mq+r \) using integers \( q,r \) with \( 0\le r<m\).  If \( r=m-1 \), then \( m-r-1=0 \) and the inequality here is trivial since \( \Delta(n)\ge 0 \) by definition.  Assume henceforth that \( r\le m-2 \), and define \( \ell = m-r-2\ge 0 \).  Then the condition \( m\nmid (n+j) \) for all \( j\in[\ell +1] \) holds, so by Theorem~\ref{ExistancePartial} there exist $n$-dimensional $K$-subspaces \( A,B \) of \( L \) with \( 1 \notin B \) and \( \delta(A,B)>\ell \); hence \( \delta(A,B)\ge \ell +1=m-r-1 \). Thus, by the definition of \( \Delta(n) \), we have \( \Delta(n)\ge m-r-1 = m-1-(n \bmod m)\), as desired.

Maximizing over all admissible \(m\) gives us
\[
\Delta(n)\ge
\max_{\substack{m\in \mathcal{M}(K,L)\\ m\le n}}
\Bigl(m-1-\bigl(n\bmod m\bigr)\Bigr).
\]

For the reverse inequality, let \( t = \Delta(n) \) and let \( A,B \) be $n$-dimensional $K$-subspaces of \( L \) with \( 1 \notin B \) and \( \delta(A,B) = t \).  If \( t = 0 \), the desired inequality clearly holds, so assume \( t > 0 \). Then \(\delta(A,B)>t-1 \geq 0\), so by Theorem~\ref{ExistancePartial} there exists an intermediate field \(K\subsetneq F\subseteq L\) such that, setting \(m_0=[F:K] \), we have \( m_0 \le n\) and 
\[
m_0\nmid (n+j)\qquad\text{for all } j\in[t].
\]
Write \(n=m_0q'+r'\) using integers \( q',r' \) with \(0\le r'<m_0\). Since \(n+(m_0 -r')=m_0(q'+1)\) is divisible by \(m_0\), the above condition forces \(m_0-r'\notin[t]\), and hence
\[
t\le m_0-r'-1.
\]
We have shown that \(\Delta(n) = t \leq m_0-1-\bigl(n\bmod m_0\bigr) \) for a specific \( m_0 \in \mathcal{M}(K,L) \) with \( m_0 \leq n \).  It follows that  
\[
\Delta(n)\le
\max_{\substack{m\in \mathcal{M}(K,L)\\ m\le n}}
\Bigl(m-1-\bigl(n\bmod m\bigr)\Bigr),
\]
completing the proof.
\end{proof}

We conclude with a result concerning finite fields.

\begin{corollary}
Consider the field extension \(\mathbb{F}_q\subseteq \mathbb{F}_{q^N}\), where \( q \) is a prime power and \(N>1\) is a composite number.  Let \(p\) denote the smallest prime divisor of \(N\).
Then for each integer \( n \) satisfying \(p\le n< N\), we have
\[
\Delta(n)
=
\max_{\substack{m\mid N\\ 2\le m\le n}}
\Bigl(m-1-\bigl(n\bmod m\bigr)\Bigr).
\]
\end{corollary}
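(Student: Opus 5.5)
This corollary is Theorem~\ref{MaxDef} specialized to $K=\mathbb{F}_q$ and $L=\mathbb{F}_{q^N}$, so the plan is to verify the hypotheses of that theorem and then compute $\mathcal{M}(K,L)$ and $n_0(K,L)$ explicitly.

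\textbf{Step 1: identify the intermediate fields.} By the Galois correspondence for finite fields, the intermediate fields of $\mathbb{F}_q\subseteq \mathbb{F}_{q^N}$ are exactly the subfields $\mathbb{F}_{q^m}$ with $m\mid N$. Each has degree $[\mathbb{F}_{q^m}:\mathbb{F}_q]=m$, and all are finite-dimensional. The proper nontrivial ones, with $K\subsetneq F\subsetneq L$, correspond to divisors $m$ with $1<m<N$.

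\textbf{Step 2: check the hypotheses of Theorem~\ref{MaxDef}.} Since $N$ is composite, its smallest prime divisor satisfies $1<p<N$. Hence $\mathbb{F}_{q^p}$ is a proper nontrivial intermediate field, so the hypothesis holds. Every divisor $m>1$ of $N$ satisfies $m\ge p$, so
\[
n_0(K,L)=p \qquad\text{and}\qquad \mathcal{M}(K,L)=\{m : m\mid N,\ 1<m<N\}.
\]
Also $[L:K]=N$, so the range $n_0(K,L)\le n<[L:K]$ becomes exactly $p\le n<N$.

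\textbf{Step 3: match the index sets.} Theorem~\ref{MaxDef} maximizes over $m\in\mathcal{M}(K,L)$ with $m\le n$. Because $n<N$, the condition $m\le n$ already excludes $m=N$. So this index set coincides with $\{m : m\mid N,\ 2\le m\le n\}$, which is the one in the stated formula. Substituting into Theorem~\ref{MaxDef} gives the result.

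The only point that needs care is the definition of $\mathcal{M}(K,L)$: whether it is meant to include $L$ itself. If $F=L$ is allowed as an intermediate field, the degree $N$ would enter $\mathcal{M}(K,L)$. It is harmless here, since the restriction $m\le n<N$ removes it from the maximum either way.
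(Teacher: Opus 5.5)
Your proposal is correct and follows the paper's proof exactly. Like the paper, you specialize Theorem~\ref{MaxDef} to $K=\mathbb{F}_q$, $L=\mathbb{F}_{q^N}$, using that the nontrivial intermediate fields are the $\mathbb{F}_{q^m}$ with $m\mid N$, $m\ge 2$, each of degree $m$; your extra checks ($n_0(K,L)=p$, $[L:K]=N$, and that $m\le n<N$ excludes $m=N$) only make explicit what the paper leaves as ``follows easily.''
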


\begin{proof}
This follows easily from Theorem~\ref{MaxDef}, since the nontrivial intermediate fields of the extension \(\mathbb{F}_q \subseteq \mathbb{F}_{q^{N}}\) are exactly the fields of the form \( \mathbb{F}_{q^{m}} \), where \( m \geq 2 \) is a divisor of \( N \), and since the degree of \( \mathbb{F}_{q^m} \) over \( \mathbb{F}_q \) is \( m \).
\end{proof}

\medskip

\begin{remark}
The theory developed in this paper on partial matchability relies on commutativity at several junctures, most notably in the construction of a linearized Dyson \(e\)-transform. Nevertheless, we expect that many of the underlying ideas admit a meaningful extension to noncommutative settings. A natural starting point would be to replace the ambient field \( L \) by a division ring \( D \) and assume that \( K \) is a subfield of the center \( Z(D) \) of \( D \). Developing a parallel theory in this context, and in particular establishing genuinely noncommutative analogues of our main results, seems a promising direction for future work.
\end{remark}

\medskip

\noindent \textbf{Declarations} 

\medskip

\noindent \textbf{\small Conflict of interest:} {\small The authors declare that they have no conflict of interest.}
\noindent \textbf{\small Data availability statement:} {\small Data sharing is not applicable to this article as no datasets were generated or analyzed during the current study.}

\enlargethispage{\baselineskip}


\begin{thebibliography}{10}

\bibitem{Aliabadi 0}
M. Aliabadi and K. Filom, Results and questions on matchings in abelian groups and vector subspaces of fields, \textit{J. Algebra} 598 (2022) 85--104.

\bibitem{Aliabadi 4}
M. Aliabadi, J. Kinseth, C. Kunz, H. Serdarevic and C. Willis, Conditions for matchability in groups and field extensions, \emph{Linear Multilinear Algebra} 71 (7) (2023) 1182--1197.

\bibitem{Aliabadi 5}
M. Aliabadi and J. Losonczy, Characterization of matchable sets and subspaces via Dyson transforms, \emph{J.\ Algebra} 706 (2026) 221--242.

\bibitem{Aliabadi 6}
M. Aliabadi and J. Losonczy, Decomposition theorems for unmatchable pairs in groups and field extensions, to appear in \textit{Ann. Combin.},
https://arxiv.org/abs/2512.12942.

\bibitem{Aliabadi 7}
M. Aliabadi and J. Losonczy, Structure and decomposition of deltoids in abelian groups, https://arxiv.org/abs/2601.09774.

\bibitem{Taylor}
M. Aliabadi and P. Taylor, Classifying abelian groups through acyclic matchings, \textit{Ann. Combin.} 29 (2025) 1019--1025.

\bibitem{Aliabadi 3}
M. Aliabadi and S. Zerbib, Matchings in matroids over abelian groups, \textit{J. Algebraic Combin.} 59 (2024) 761--785.

\bibitem{Alon} 
N. Alon, C. K. Fan, D. Kleitman and J. Losonczy, Acyclic matchings, \textit{Adv. Math.} 122 (2) (1996) 234--236.

\bibitem{Ehrenborg}
R. Ehrenborg, On apolarity and generic canonical forms, \textit{J. Algebra} 213 (1) (1999) 167--194.

\bibitem{Ehrenborg Rota}
R. Ehrenborg and G.-C. Rota, Apolarity and canonical forms for homogeneous polynomials, \textit{European J. Combin.} 14 (1993) 157--191. 

\bibitem{Eliahou 1}
S. Eliahou and C. Lecouvey, Matchings in arbitrary groups, \textit{Adv. in Appl. Math.} 40 (2008) 219--224.
 
\bibitem{Eliahou 2}
S. Eliahou and C. Lecouvey, Matching subspaces in a field extension, \textit{J. Algebra} 324 (2010) 3420--3430.

\bibitem{Losonczy 1}
C. K. Fan and J. Losonczy, Matchings and canonical forms for symmetric tensors, \textit {Adv. Math.} 117 (2) (1996) 228--238.

\bibitem{Hamidoune 2} 
Y. O. Hamidoune, An isoperimetric method in additive theory, \textit{J. Algebra} 179 (2) (1996) 622--630.
 
\bibitem{Hamidoune}
Y. O. Hamidoune, Counting certain pairings in arbitrary groups, \textit{Combin. Probab. Comput.} 20 (6) (2011) 855--865.

\bibitem{Hamidoune 3}
Y. O. Hamidoune, Extensions of the Scherk-Kemperman theorem, \textit{J. Combin. Theory Ser. A} 117 (7) (2010) 974--980.
 
\bibitem{Levi}
F. W. Levi, Ordered groups, \textit{Proc. Indian Acad. Sci.} 16 (1942) 256--263.

\bibitem{Losonczy 2}
J. Losonczy, On matchings in groups, \textit{Adv. in Appl. Math.} 20 (3) (1998) 385--391.

\bibitem{Mirsky}
L.\ Mirsky, \textit{Transversal Theory}, Academic Press, New York, 1971. 
  
\bibitem{Wakeford}
E. K. Wakeford, On canonical forms, \textit{Proc. Lond. Math. Soc.} (2) 18 (1920) 403--410.

\end{thebibliography}
\end{document}